\documentclass[11pt]{article}
\usepackage[utf8]{inputenc}

\usepackage[all, cmtip]{xy}
\usepackage[utf8]{inputenc} 
\usepackage[T1]{fontenc}    
\usepackage{booktabs}       
\usepackage{amsfonts}       
\usepackage{nicefrac}       
\usepackage{microtype}      
\usepackage[pdftex]{graphicx}
\usepackage{todonotes}
\usepackage{amsmath}

\usepackage{tikz-cd}
\usetikzlibrary{calc}

\usepackage{amssymb}
\usepackage{amsthm}
\usepackage{thmtools}
\usepackage[noadjust]{cite}
\usepackage{caption}
\usepackage{bbm}

\usepackage{tikz}
\usetikzlibrary{decorations.pathreplacing}
\usepackage[T1]{fontenc}
\usepackage[utf8]{inputenc}
\usepackage{mathtools}
\usepackage{verbatim}

\usepackage[top=2.9cm, bottom=2.9cm, left=2.5cm, right=2.5cm, headsep=0.2in]{geometry}

\usepackage[scr=boondoxo]{mathalpha}
\usepackage[colorlinks=true, linkcolor=blue, citecolor=blue, urlcolor=blue, breaklinks=true]{hyperref}
\usepackage[capitalise]{cleveref}

\usepackage{fancyhdr}
\usepackage{cleveref}
\usepackage{soul}
\theoremstyle{plain}
\newtheorem{theorem}{Theorem}[section]

\newtheorem{prop}[theorem]{Proposition}
\newtheorem{lemma}[theorem]{Lemma}

\theoremstyle{definition}
\newtheorem{definition}[theorem]{Definition}
\newtheorem{remark}[theorem]{Remark}

\newtheorem{example}[theorem]{Example}

\newcommand{\CC} {\mathbb C}
\newcommand{\ZZ} {\mathbb Z}

\newcommand{\QQ} {\mathbb Q}

\def\id{\mathrm{id}}
\def\H{\mathrm{H}}

\def\ker{\mathrm{ker}}

\def\c#1{\mathcal{#1}}

\def\u#1{\underline{#1}}

\def\lim{\mathrm{lim}}
\def\ra{\rightarrow}
\def\xra{\xrightarrow}
\def\xla{\xleftarrow}
\newcommand{\TC}{\mathrm{TC}}
\newcommand{\cat}{\mathrm{cat}}

\def\secat#1#2{\mathrm{secat}_{#1}(#2)}

\def\sc#1#2{\mathrm{sc}_{#1}(#2)}
\def\msc#1#2{\mathrm{msc}_{#1}(#2)}
\def\hsc#1#2{\mathrm{Hsc}_{#1}(#2)}

\def\img{\mathrm{img}}
\def\nil{\mathrm{nil}}

\newcommand{\rsnote}[1]{\todo[color=blue!20,linecolor=yellow!20!black,size=\tiny]{#1}}

\begin{document}

	\title{Rational relative sectional category}

	\author{ Lekha Das\footnote{Department of Mathematics, Indian Institute of Technology Bombay, India. 22d0790@iitb.ac.in}, Bittu Singh\footnote{Department of Mathematics, Indian Institute of Technology Bombay, India. 22d0781@iitb.ac.in} }
	\maketitle

	\begin{abstract}
    We develop an algebraic model for the relative sectional category of a continuous map in rational homotopy theory using commutative differential graded algebras (CDGAs). 
    Our main result establishes that for formal maps, the rational relative sectional category can be computed  purely from cohomology, using ideal nilpotency.
    We also show that this equality may fail in general topological settings. Applying this framework, we obtain purely algebraic characterizations for the rational Lusternik-Schnirelmann category and the rational higher topological complexity of a map. 
   Finally, we provide an algebraic description of the rational homotopic distance between formal maps.
	\end{abstract}

	\noindent {\it Keywords}: rational homotopy theory, relative sectional category, topological complexity of a map, rational sectional category, Lusternik-Schnirelmann category, formal maps.

	\noindent 2020 {\it Mathematics Subject Classification:} Primary: 55P62; Secondary: 55M30, 55M20 .  
	
	\vspace{.1in}
\section{Introduction}
The notion of \emph{sectional category} was introduced by Schwarz in his seminal paper \cite{Schwarz1966} as a numerical homotopy invariant associated to a fibration or, more generally, to a continuous map. Informally, the sectional category of a map $f:E\ra B$ measures the minimal number of open sets required to cover $B$, such that $f$ admits a local homotopy section on each set. Schwarz established several bounds for this invariant. 
A particularly important example is the \emph{Lusternik-Schnirelmann category} of a space $X$ \cite{LusternikSchnirelmann1934}, which can be recovered as the sectional category of the inclusion of a point into $X$.
Explicitly, the LS category of a space $X$, denoted by $\mathrm{cat}(X)$, is the smallest  non-negative integer $r\leq \infty$ such that $X$ can be covered by $r+1$ open subsets $V_0, V_1, \dots, V_r$, where each inclusion $V_i\xhookrightarrow{} X$ is null-homotopic.

Motivated by problems in robotics and motion planning, Farber introduced the invariant of topological complexity in a series of influential papers \cite{Farber2003,Farber2004,Farber2006,Farber2008}. For a path-connected space $X$, the \emph{topological complexity} $\TC(X)$ is defined as the sectional category of the free path fibration $X^I\ra X\times X$ which assigns to each path $\gamma$ its pair of endpoints $(\gamma(0),\gamma(1))$. Subsequently, Rudyak and others \cite{ Rudyak2010, Rudyak_Gon_Higher} introduced a higher-order generalization of topological complexity for sequential motion planning. The \emph{$n$-th topological complexity} $\TC_n(X)$ is the sectional category of the fibration $\pi_n:X^I\ra X^n$ given by $$\pi_n(\gamma)=\bigg(\gamma(0),\gamma\bigg(\frac{1}{n-1}\bigg), \dots, \gamma\bigg(\frac{n-2}{n-1}\bigg),\gamma(1)\bigg).$$

A powerful approach to understanding such homotopy invariants is provided by localization, and in particular, by rational homotopy theory, where spaces are studied up to rational equivalence; the details are given in \cite{FelixHalperinThomas2001}. 
In this rational setting, Félix and Halperin \cite{FelixHalperin1982} obtained a complete algebraic characterization of the Lusternik-Schnirelmann category of a rational space $X_0$ in terms of its Sullivan minimal  model. Since the LS category and topological complexity are closely related, Farber posed the problem of finding an analogous algebraic description of the topological complexity of rational spaces, see \cite[Problem 31]{Farber2006}.

This problem has been investigated by several authors over the past two decades. A general framework for describing the sectional category of the rationalization of a fibration was developed by Fass\`o Velenik \cite{FassoVelenik2002}. In \cite{FernandezSuarez2006}, the authors introduced  $MTC(X)$, a rational homotopy invariant providing a lower bound for $\TC(X_0)$. A more explicit and computable description was later given by Jessup, Murillo, and Parent \cite{JessupMurilloParent2012}, who showed that for formal spaces the rational topological complexity attains its cohomological lower bound. Further advances were obtained by Carrasquel in a series of papers \cite{Carrasquel2015,Carrasquel2016a,Carrasquel2016b,Carrasquel2017a,Carrasquel2018}, including an upper bound for the rational sectional category that leads to a cohomological characterization for formal maps.

While the rational topological complexity of spaces is well-understood, many problems in motion planning involve constrained environments or maps between configuration spaces. The purpose of this paper is to extend these algebraic characterizations to the relative setting. We develop an algebraic description of the rational topological complexity of a map, formulated in terms of algebraic models. 
\begin{theorem}(\cref{thm: secat top vs algebra})
    Let $f: K\to B$ and $p: E\to B.$ be two maps.
    Let $\psi:\c B\to \c E$ be a surjective model for $p_0$ and $\varphi: \c B\to \c K$ be any model for $f_0$.
     Then, $$\mathrm{secat}_{f_0}(p_0)=\mathrm{secat}_{\varphi}(\psi).$$
\end{theorem}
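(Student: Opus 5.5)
We will follow the strategy of Carrasquel's characterization of rational sectional category and extend it to the relative setting. First we fix the definitions. The relative sectional category $\mathrm{secat}_f(p)$ is the least $n$ such that $K$ is covered by $n+1$ open sets $U_i$ with $f|_{U_i}$ admitting a homotopy lift through $p$. We will use the Ganea-type characterization: $\mathrm{secat}_f(p)\le n$ if and only if $f$ lifts up to homotopy through the $n$-th join map $j_n\colon *^n_B E\to B$, the fibrewise join of $n+1$ copies of $p$. On the algebraic side we take the definition of $\mathrm{secat}_{\varphi}(\psi)$ to be the least $n$ such that the morphism $\varphi$ factors up to homotopy through the CDGA model of the $n$-fold join. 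Here $\mathcal B\to \mathcal B\oplus \ker(\psi)^{\otimes n+1}$-type model is used. Concretely, we use the projection $\mathcal B\to \mathcal B/K^{n+1}$ when $K=\ker\psi$ arises as in Carrasquel's theorem; more precisely, we use the explicit Sullivan model of the join built by iterated homotopy pushouts/pullbacks.

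\textbf{Step 1: a CDGA model of the join.} Since $\psi$ is surjective, we will show by induction on $n$ that a model of $j_n$ is obtained from iterated fibre joins. The fibrewise join of $p_1,p_2$ is the homotopy pushout of $E_1\leftarrow E_1\times_B E_2\to E_2$ over $B$. By the Félix–Halperin–Thomas pullback and pushout theorems:
\begin{itemize}
\item the pullback is modelled by the tensor product $\mathcal E_1\otimes_{\mathcal B}\mathcal E_2$, which needs one map to be a relative Sullivan model or a surjection; this is where surjectivity of $\psi$ enters after replacing by a relative Sullivan model;
\item the homotopy pushout is modelled by the fibre product of CDGAs $\mathcal E_1\times_{\mathcal E_1\otimes_{\mathcal B}\mathcal E_2}\mathcal E_2$, which again uses surjectivity.
\end{itemize}
This yields a surjective model $\psi_n\colon \mathcal B\to \mathcal J_n$ of $(j_n)_0$, and it depends only on $\psi$. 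Rationalization commutes with these constructions for nilpotent finite-type spaces, so $(j_n)_0$ is the join of $p_0$.

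\textbf{Step 2: transfer lifting problems.} Homotopy classes of maps $K_0\to (*^n_B E)_0$ over $B_0$ correspond to homotopy classes of CDGA morphisms $\mathcal J_n\to \mathcal K$ under $\mathcal B$. This requires replacing $\varphi$ by a Sullivan model and using the adjunction between Sullivan's realization and $A_{PL}$ in the relative, under-$\mathcal B$ homotopy category. Hence $f_0$ lifts through $(j_n)_0$ up to homotopy if and only if $\varphi$ extends through $\psi_n$ up to homotopy.

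\textbf{Step 3: independence of choices.} We check that $\mathrm{secat}_\varphi(\psi)$ is invariant under quasi-isomorphic changes of the surjective model $\psi$ and of $\varphi$. This lets us assume that $\mathcal B$ is Sullivan and that $\varphi$ is a relative Sullivan algebra.

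\textbf{Main obstacle.} We expect the main difficulty to lie in Step 2. There, the homotopy lifting problem "over $B$" must be matched with a strict-versus-homotopy factorization in CDGAs. One has to control the homotopy of the triangle, not just the existence of a map $\mathcal J_n\to\mathcal K$. We would handle this with a relative lifting lemma: factor $\psi_n$ as a relative Sullivan algebra followed by a surjective quasi-isomorphism, and use that relative Sullivan algebras are cofibrant under $\mathcal B$. That converts homotopy-commutative factorizations into strict ones, and the same argument applied topologically, via the fibration replacement of $j_n$, gives the equivalence.
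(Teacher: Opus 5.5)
\textbf{The gap: you prove the theorem for a different invariant.} The statement concerns the specific invariant $\mathrm{secat}_{\varphi}(\psi)$ defined in the paper. It is the least $m$ for which there is a CDGA map $\tau\colon \mathcal B^{\otimes(m+1)}\otimes\Lambda W_m\to\mathcal K$ with $\tau\circ i_m=\varphi\circ\mu_{m+1}$. Here $i_m$ is a relative Sullivan model of the fat-wedge quotient $q_m\colon\mathcal B^{\otimes(m+1)}\to\mathcal B^{\otimes(m+1)}/\mathcal N^{\otimes(m+1)}$, with $\mathcal N=\ker\psi$. Your proposal never touches this object. You replace it by a different invariant: factorization of $\varphi$, up to homotopy, through a model of the fibrewise join. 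Even if Steps 1--3 were completed, they would only show that $\mathrm{secat}_{f_0}(p_0)$ equals your join invariant. You would still need to prove that the join invariant equals the fat-wedge invariant. That is an algebraic Ganea-versus-Whitehead comparison, which is not formal. Deducing it through topology would be circular, since that comparison is exactly the theorem.

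\textbf{The route that reaches the stated definition.} Use the following three ingredients:
\begin{enumerate}
\item the Whitehead-type characterization: $\mathrm{secat}_{f}(p)\le m$ if and only if $\Delta_{m+1}\circ f$ factors up to homotopy through $W_m(p)\colon T^m(p)\to B^{m+1}$;
\item the Félix--Tanré theorem that $q_m$ models $W_m(p_0)$ when $\psi$ is surjective (this, not pullback modelling, is where surjectivity enters);
\item the fact that $\mu_{m+1}$ models $\Delta_{m+1}$.
\end{enumerate}
After that, your Step 2 idea is exactly what is needed:
\begin{itemize}
\item lift at the level of minimal models, $(\Lambda V)^{\otimes(m+1)}\otimes\Lambda W_m\to\Lambda U$, using the relative lifting lemma;
\item push out along $\alpha^{\otimes(m+1)}$ to reach $i_m$;
\item strictify $\tau' i_m\simeq\varphi\mu_{m+1}$ using that $i_m$ is a cofibration.
\end{itemize}
The reverse inequality then comes from spatial realization.

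\textbf{Concrete errors in Step 1.} The projection $\mathcal B\to\mathcal B/\mathcal N^{n+1}$ is not a model of the join map in general. Take $p\colon\ast\to S^2$ with $\psi\colon H^*(S^2)\to\mathbb{Q}$ the augmentation. Then $\mathcal N^2=0$, so this projection is the identity. But the first fibrewise join of $\ast\to S^2$ is the Ganea map $\Sigma\Omega S^2\to S^2$, which is not a rational equivalence. In the paper, that projection only defines the upper bound $\mathrm{sc}_{\varphi}(\psi)\ge\mathrm{secat}_{\varphi}(\psi)$.

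Likewise, $\mathcal E_1\otimes_{\mathcal B}\mathcal E_2$ models the homotopy pullback only when one leg is a relative Sullivan algebra; a surjection does not suffice. For two copies of $\psi$ you get $\mathcal B/\mathcal N\otimes_{\mathcal B}\mathcal B/\mathcal N=\mathcal B/\mathcal N$. For $\ast\to B$ this is $\mathbb{Q}$, not a model of $\Omega B$. So surjectivity of your iterated $\psi_n$ is not automatic and would have to be restored at each stage.

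Finally, the invariance under change of models claimed in Step 3 is asserted without argument. For the paper's definition, that invariance is a consequence of the theorem, not an available input.
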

Our main result establishes that for formal maps, the rational relative sectional category can be computed  purely from cohomology using ideal nilpotency.
\begin{theorem}(\cref{thm: secat for formal cdga map})
    Let $\varphi$ and $\psi$ be formal CDGA maps such that $\psi$
     and $\psi^\ast$ are surjective. Then $$\secat{\varphi}{\psi}= \nil({\varphi^\ast}(\ker{\psi^\ast})).$$
\end{theorem}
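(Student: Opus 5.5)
We prove the two inequalities separately, working with the algebraic definition of $\secat{\varphi}{\psi}$ from the paper. We take that definition in the form suggested by the Félix–Halperin / Carrasquel approach: for a surjective CDGA map $\psi:\c B\to \c E$ with kernel $K=\ker\psi$ and a map $\varphi:\c B\to\c K$, the invariant $\secat{\varphi}{\psi}$ is the least $n$ such that the composite $\c B\xra{\varphi}\c K\to \c K/\varphi(K)^{n+1}$ (or a relative Sullivan model of it) admits a homotopy retraction over $\c B$. Since $\varphi$ and $\psi$ are formal, we may replace the pair $(\varphi,\psi)$ by the cohomology maps $(\varphi^\ast,\psi^\ast)$, viewed as CDGA maps with zero differential. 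This uses the homotopy invariance of $\mathrm{secat}_{\varphi}(\psi)$, which should follow from \cref{thm: secat top vs algebra} or from the model-independence established before it. The hypothesis that $\psi^\ast$ is surjective ensures that $\psi^\ast$ is itself a surjective model, so the definition applies to it directly.

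\emph{Lower bound.} Suppose $\secat{\varphi^\ast}{\psi^\ast}\le n$. The retraction gives a map that is injective in cohomology on the relevant part of $\c K$, and we use it to show that $\varphi^\ast(\ker\psi^\ast)^{n+1}=0$. The argument is the usual cup-length one. Any product $\varphi^\ast(x_0)\cdots\varphi^\ast(x_n)$ with $x_i\in\ker\psi^\ast$ maps to zero in $H(\c K/\varphi(K)^{n+1})$. The retraction then forces the product to vanish already in $H(\c K)$, because it factors through that quotient. Hence $\nil(\varphi^\ast(\ker\psi^\ast))\le\secat{\varphi}{\psi}$. This direction should not need formality.

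\emph{Upper bound.} Let $n=\nil(\varphi^\ast(\ker\psi^\ast))$, so that $I^{n+1}=0$ for $I=\varphi^\ast(\ker\psi^\ast)\subseteq H(\c K)$. In the formal model the differential is zero, so $\c K/\varphi(K)^{n+1}$ is simply $H(\c K)/I^{n+1}=H(\c K)$. The projection is therefore the identity, and it trivially admits a retraction over $H(\c B)$. We then transport this back to $(\varphi,\psi)$ by homotopy invariance, which gives $\secat{\varphi}{\psi}\le n$.

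The main obstacle is making the transport rigorous. The paper's definition probably involves a surjective model and perhaps a relative Sullivan model of the quotient map $\c K\to \c K/\varphi(\ker\psi)^{n+1}$. We therefore need to check that formality of both $\varphi$ and $\psi$ can be realized by compatible quasi-isomorphisms, that is, by a single zigzag that relates $\c B,\c E,\c K$ to $H(\c B),H(\c E),H(\c K)$ simultaneously. This is delicate when $\varphi$ and $\psi$ are each formal but not jointly formal. Surjectivity of $\psi^\ast$ is what allows $\ker\psi^\ast$ to model $\ker\psi$: the long exact sequence gives $H(\ker\psi)\to\ker\psi^\ast$ surjective. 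We would first attempt to build the compatible zigzag from minimal models of $\c B$, and then lift both formality maps from a common model of the source $\c B$.
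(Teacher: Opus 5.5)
Your overall structure matches the paper's: pass to $(\varphi^\ast,\psi^\ast)$, prove an upper bound there, and prove a cup-length lower bound. However, both halves are run against an invariant that is not the paper's $\secat{\varphi}{\psi}$, and the step connecting the two is missing.

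\textbf{The upper bound.} The paper defines $\secat{\varphi}{\psi}$ through the fat-wedge model. It is the least $m$ such that $\varphi\mu_{m+1}$ extends along a relative Sullivan model $i_m$ of $q_m\colon \mathcal{B}^{\otimes(m+1)}\to\mathcal{B}^{\otimes(m+1)}/\mathcal{N}^{\otimes(m+1)}$. Equivalently, $j_m\colon\mathcal{K}\to\mathcal{K}\otimes\Lambda W_m$ admits a retraction. The condition you use, a retraction attached to $\mathcal{K}\to\mathcal{K}/\varphi(\mathcal{N})^{m+1}$, is a variant of the paper's auxiliary invariant $\sc{\varphi}{\psi}$. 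The paper only shows that invariant to be an \emph{upper bound} for $\secat{\varphi}{\psi}$. So your ``trivial retraction'' shows the ideal-power invariant of the cohomology pair is at most $m$. It does not produce a $\tau$ with $\tau i_m=\varphi\mu_{m+1}$.

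The missing idea is the comparison $\secat{\varphi}{\psi}\le\sc{\varphi}{\psi}$:
\begin{enumerate}
\item Since $\mu_{m+1}(\mathcal{N}^{\otimes(m+1)})\subseteq\mathcal{N}^{m+1}$, multiplication descends to $\underline{\mu}_{m+1}\colon\mathcal{B}^{\otimes(m+1)}/\mathcal{N}^{\otimes(m+1)}\to\mathcal{B}/\mathcal{N}^{m+1}$.
\item Lifting against the surjective quasi-isomorphism $\mathcal{B}\otimes\Lambda Z_m\to\mathcal{B}/\mathcal{N}^{m+1}$ gives $\alpha$ with $\alpha i_m=\underline{i}_m\mu_{m+1}$.
\item The pushout property then gives $\rho\colon\mathcal{K}\otimes\Lambda W_m\to\mathcal{K}\otimes\Lambda Z_m$ with $\rho j_m=\underline{j}_m$.
\end{enumerate}
Only at this point does your observation enter. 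When $\varphi(\mathcal{N}^{m+1})=0$, the pushout yields $\underline{\lambda}\colon\mathcal{K}\otimes\Lambda Z_m\to\mathcal{K}/\varphi(\mathcal{N}^{m+1})=\mathcal{K}$ with $\underline{\lambda}\,\underline{j}_m=\mathrm{id}$. For $(\varphi^\ast,\psi^\ast)$ this vanishing is automatic once $\varphi^\ast(\ker\psi^\ast)^{m+1}=0$. Then $\underline{\lambda}\rho$ retracts $j_m$.

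\textbf{The lower bound.} It fails for the same reason, in the opposite direction. Showing that $\nil(\varphi^\ast(\ker\psi^\ast))$ is at most your ideal-power invariant bounds from below something that sits \emph{above} $\secat{\varphi}{\psi}$. That says nothing about $\secat{\varphi}{\psi}$ itself. The cup-length argument must go through the fat wedge:
\begin{enumerate}
\item Choose cocycle representatives $x_k-db_k\in\mathcal{N}$. This uses surjectivity of $\psi$, not of $\psi^\ast$. The same is true of the surjectivity $H(\ker\psi)\to\ker\psi^\ast$ you attribute to $\psi^\ast$; surjectivity of $\psi^\ast$ is only needed so that $\psi^\ast$ is admissible in the definition.
\item Set $z=(x_1-db_1)\otimes\cdots\otimes(x_{m+1}-db_{m+1})\in\mathcal{N}^{\otimes(m+1)}$. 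The product is then $\varphi^\ast\mu_{m+1}^\ast[z]$.
\item For the pushout map $\gamma$, $j_m^\ast\varphi^\ast\mu_{m+1}^\ast[z]=\gamma^\ast i_m^\ast[z]=0$, because $q_m(z)=0$.
\item A retraction of $j_m$ makes $j_m^\ast$ injective.
\end{enumerate}

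\textbf{The formality transport.} You single this out as the main obstacle. The paper disposes of it in one line, $\secat{\varphi}{\psi}=\secat{\varphi^\ast}{\psi^\ast}$, in effect using a zigzag for $\mathcal{B}$ that serves both maps. Such a zigzag is available in its applications, e.g.\ $((f^\ast)^{\otimes n},\mu_n)$. Your caution there is reasonable, but what is actually missing from your argument is the fat-wedge versus ideal-power comparison above.
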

In particular, we show that the rational topological complexity of a formal map can be computed purely algebraically via cohomological data.
\begin{prop}(\cref{prop: Tc for formal map})
For a formal map $f:X \ra Y$, the $n$-th topological complexity of the map $f_0$ is given by
    $$\TC_n(f_0)=\nil(({f^\ast})^{\otimes n}(\ker \mu_n)),$$
where $\mu_n: H^\ast (Y)^{\otimes n} \ra H^\ast(Y)$ is the multiplication map.
\end{prop}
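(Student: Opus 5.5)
The plan is to reduce the proposition to \cref{thm: secat for formal cdga map}, using \cref{thm: secat top vs algebra} as the bridge to topology. First I fix the relevant definitions. The $n$-th topological complexity of a map $f:X\to Y$ should be the relative sectional category $\mathrm{secat}_{f^n}(\pi_n)$, where $f^n=f\times\cdots\times f: X^n\to Y^n$ and $\pi_n:Y^I\to Y^n$ is the sequential path fibration. Rationalization commutes with finite products, and $\pi_n$ is fibrewise equivalent to the diagonal $\Delta_n:Y\to Y^n$. So $\TC_n(f_0)=\mathrm{secat}_{f_0^n}((\Delta_n)_0)$, and this relative sectional category is a homotopy invariant of the pair of maps.

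Next I write down models. Since $f$ is formal, there is a model $f^\ast: H^\ast(Y)\to H^\ast(X)$ for $f_0$, with zero differentials. Tensor products of models are models of products, so $\varphi=(f^\ast)^{\otimes n}: H^\ast(Y)^{\otimes n}\to H^\ast(X)^{\otimes n}$ is a model for $f_0^n$, and it is formal. For the diagonal, the multiplication $\mu_n: H^\ast(Y)^{\otimes n}\to H^\ast(Y)$ is a CDGA model of $(\Delta_n)_0$, because $Y$ is formal (formality of the map includes formality of $Y$). Moreover $\mu_n$ is surjective, since $a\mapsto a\otimes 1\otimes\cdots\otimes 1$ is a section. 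With trivial differentials, $\psi=\mu_n$ equals its own cohomology map, so both $\psi$ and $\psi^\ast=\mu_n$ are surjective, and $\psi$ is formal.

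Now I apply the two theorems. \cref{thm: secat top vs algebra}, with the surjective model $\psi=\mu_n$ of $(\Delta_n)_0$ and the model $\varphi$ of $f_0^n$, gives $\TC_n(f_0)=\mathrm{secat}_{\varphi}(\mu_n)$. Then \cref{thm: secat for formal cdga map} gives
\[
\mathrm{secat}_{\varphi}(\mu_n)=\nil\bigl(\varphi^\ast(\ker\mu_n)\bigr).
\]
Since all differentials vanish, $\varphi^\ast=(f^\ast)^{\otimes n}$, which yields the formula.

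The main obstacle is the compatibility of models in the middle step. The multiplication model of the diagonal and the tensor model of $f^n$ must live over the \emph{same} CDGA $H^\ast(Y)^{\otimes n}$, modelling $Y^n$ consistently. This is guaranteed by the formal map $f$ being modelled on cohomology through a single zig-zag for $Y$, tensored $n$ times. One also needs that $\mu_n$ genuinely models $\Delta_n$ under the identification $H^\ast(Y^n)\cong H^\ast(Y)^{\otimes n}$, which requires the usual finite-type hypothesis on $Y$.
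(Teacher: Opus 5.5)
Your proposal is correct and follows essentially the same route as the paper: model $(f_0)^n$ by $(f^\ast)^{\otimes n}$ and the diagonal $\Delta_n$ by the surjective multiplication $\mu_n$, identify $\TC_n(f_0)$ with $\mathrm{secat}_{(f^\ast)^{\otimes n}}(\mu_n)$, and conclude with \cref{thm: secat for formal cdga map}. The paper packages your direct appeal to \cref{thm: secat top vs algebra} as \cref{prop: rational version of tc}, and leaves implicit the compatibility of the two models over the common $H^\ast(Y)^{\otimes n}$ that you point out.
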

We also apply these ideas to other invariants, such as LS-category and homotopic distance. Using this theory, we carry out some explicit computations.

\section{Background}
\subsection{Rational homotopy theory}
In this paper, we use methods from rational homotopy theory to study
the relative sectional category and the  higher topological complexity of maps in the rational setting.
Let  CDGA denote a commutative differential graded algebra over $\mathbb{Q}$. Unless stated otherwise, all CDGAs are assumed to be cohomologically $1$-connected and of finite type, and all spaces are simply connected CW complexes of finite type. When coefficients are not specified, cohomology is taken with rational coefficients. A CDGA $A$ is called cohomologically $1$-connected if $H^0(A)=\QQ$ and $H^1(A)=0.$ It is said to be of finite type if each $H^k(A)$ is a finite-dimensional $\mathbb{Q}$-vector space for  $k \geq 0$. A space $X$ is simply connected if it is path connected and $\pi_1(X)=0$. It is said to be of finite type if $\pi_k(X)\otimes \QQ$ is a finite-dimensional $\QQ$-vector space for all $k\geq 0.$

A space $X$ is called \emph{rational} if its homotopy groups $\pi_\ast(X)$ are rational vector spaces. For any simply connected CW complex $X$ of finite type, there exists a map $\rho_X \colon X \to X_0$, where $X_0$ is a rational space and the induced map $\pi_\ast(\rho_X)\otimes \mathbb{Q}$ is an isomorphism. The space $X_0$ is called the \emph{rationalization} of $X$ (see, for example, \cite{FelixHalperinThomas2001}).
Given a map $f \colon X \to Y$, there is an induced map $f_0 \colon X_0 \to Y_0$, called the \emph{rationalization of $f$}, satisfying $\rho_Y \circ f \simeq f_0 \circ \rho_X$. Throughout this paper, the notation $f_0$ is reserved for the rationalization of $f$.

A CDGA morphism $f \colon (A,d) \to (B,d)$ is a \emph{quasi-isomorphism} if it induces an isomorphism on cohomology.
A \emph{(Sullivan) minimal algebra} $(\Lambda V, d)$ is a free CDGA on a graded vector space $V$ such that $d(V)\subseteq\Lambda^{\geq 2}V$. Every simply connected space $X$ admits a minimal model, that is, a quasi-isomorphism $\rho$ from a minimal algebra to the polynomial de Rham algebra of $X$ $$\rho:(\Lambda V, d)\ra A_{PL}(X).$$ This establishes a Quillen equivalence between the category of simply connected CW-complexes and the category of CDGAs. 

A \emph{(minimal) relative Sullivan algebra} is a CDGA of the form $(A\otimes \Lambda V,d)$ where $$d(V)\subseteq (A^+\otimes \Lambda V)+(A\otimes \Lambda ^{\geq 2} V).$$
Any CDGA map $\varphi:A \ra B$ admits a factorization through a (minimal) relative Sullivan algebra as
\[\begin{tikzcd}
	{(A,d)} & {(B,d)} \\
	& {(A\otimes\Lambda V,d)}
	\arrow["\varphi", from=1-1, to=1-2]
	\arrow["i"', hook, from=1-1, to=2-2]
	\arrow["\rho"', from=2-2, to=1-2]
\end{tikzcd}\]
where $i$ is the inclusion, $\rho$ is a quasi-isomorphism and $\rho i=\varphi.$
The map $i$ is called the \emph{minimal Sullivan model} of $\varphi.$

Two CDGAs $A$ and $B$ are said to be \emph{weakly equivalent} if they are connected by a zig-zag of quasi-isomorphisms. Equivalently, $A$ and $B$ are weakly equivalent if there exist quasi-isomorphisms
\[
A \xleftarrow{\sim} \Lambda V \xrightarrow{\sim} B,
\]
where $\Lambda V$ is a minimal algebra. Two CDGA maps $\varphi_1 \colon A_1 \to B_1$ and $\varphi_2 \colon A_2 \to B_2$ are \emph{weakly equivalent} if there exists a homotopy-commutative diagram
\[
\begin{tikzcd}
	{A_1} & {\Lambda V} & {A_2} \\
	{B_1} & {\Lambda W} & {B_2}
	\arrow["{\varphi_1}"', from=1-1, to=2-1]
	\arrow["\sim"', from=1-2, to=1-1]
	\arrow["\sim", from=1-2, to=1-3]
	\arrow[from=1-2, to=2-2]
	\arrow["{\varphi_2}", from=1-3, to=2-3]
	\arrow["\sim"', from=2-2, to=2-1]
	\arrow["\sim", from=2-2, to=2-3]
\end{tikzcd}
\]
with $\Lambda V$ and $\Lambda W$ minimal algebras.

A CDGA $A$ is called \emph{formal} if $A$ is weakly equivalent to $H^\ast(A).$ Similarly, a CDGA morphism $f:A\ra B$ is called \emph{formal} if $f$ is weakly equivalent to $f^\ast:H^\ast(A)\ra H^\ast(B).$

A CDGA $(A,d)$ is a \emph{model} for a space $X$ if it is weakly equivalent to $A_{PL}(X)$, and a CDGA map $\varphi \colon A \to B$ is a \emph{model} for a map $f \colon X \to Y$ if it is weakly equivalent to the induced map $A_{PL}(f) \colon A_{PL}(Y) \to A_{PL}(X)$. A space $X$ (respectively, a map $f$) is called \emph{formal} if $H^\ast(X)$ (respectively, $f^\ast$) is a model for $X$ (respectively, $f$).

In particular, for formal spaces, the rational homotopy type of a space is completely determined by its cohomology algebra, and similarly, a formal map is determined by its induced maps on cohomology. This perspective motivates the study of whether invariants such as topological complexity  admit explicit cohomological descriptions.

We now record some useful technical results that allow us to replace homotopy-commutative diagrams by strictly commutative ones after suitable modifications of the maps. 
\begin{lemma}\label{lem: make homotopy to equality}
Let $i \colon A \to B$ be a cofibration and let $f \colon B \to C$, $g \colon A \to C$ be morphisms in a model category with $C$ a fibrant object such that
\[
f \circ i \simeq g
\]
via a right homotopy. Then there exists a map $f' \colon B \to C$ such that $f' \simeq f$ and
\[
f' \circ i = g.
\]
\end{lemma}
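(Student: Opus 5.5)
The plan is to work with a path object for $C$ and transport the homotopy along the cofibration, via the lifting axiom. Since $C$ is fibrant, fix a good path object
\[
C \xrightarrow{s} C^I \xrightarrow{(p_0,p_1)} C\times C,
\]
where $s$ is a weak equivalence and $(p_0,p_1)$ is a fibration. Because $C$ is fibrant, each projection $p_k\colon C^I\to C$ is then a trivial fibration. The projection $C\times C\to C$ is a fibration, being a pullback of $C\to *$. Composing with $(p_0,p_1)$ shows $p_k$ is a fibration, and $p_k s=\id_C$ together with two-out-of-three makes it a weak equivalence. A right homotopy from $f\circ i$ to $g$ may be taken through some path object. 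Every path object receives a map from a good one compatible with the endpoint maps, so we may assume it is a map $H\colon A\to C^I$ into this good path object, with $p_0H=f\circ i$ and $p_1H=g$.

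The key step is a single lifting square. Its left edge is the cofibration $i\colon A\to B$ and its right edge is the trivial fibration $p_0\colon C^I\to C$. The top map is $H\colon A\to C^I$ and the bottom map is $f\colon B\to C$. The square commutes because $p_0H=fi$. The lifting axiom gives $\widetilde H\colon B\to C^I$ with $\widetilde H\circ i=H$ and $p_0\widetilde H=f$.

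Now set $f'=p_1\widetilde H$. Then $f'\circ i=p_1H=g$. Moreover $\widetilde H$ is a right homotopy from $f$ to $f'$, since $(p_0,p_1)\widetilde H=(f,f')$. Hence $f'\simeq f$, as required.

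I expect the only delicate point to be the reduction to a good path object. The hypothesis only supplies a right homotopy through some path object $C\to P\to C\times C$. Factor $C\to C\times C$ as a trivial cofibration followed by a fibration to get a very good path object $C^I$. Lifting in the square formed by the trivial cofibration $C\to C^I$ and the fibration $P\to C\times C$ gives a map $C^I\to P$ over $C\times C$. That is the wrong direction for pushing $H$ forward, so this step needs more care. One standard fix uses that $i$ is a cofibration and $C$ is fibrant: then left and right homotopy coincide on maps $A\to C$ as soon as $A$ is cofibrant, and a right homotopy can be realized through any chosen good path object. In the CDGA applications all objects are fibrant and the cofibrations are relative Sullivan algebras out of cofibrant sources, so this holds. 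Alternatively, one can simply read the hypothesis as a right homotopy through a good path object, which is the usual convention. With either fix in place, the rest is the one-line lifting argument above.
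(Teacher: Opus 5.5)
Your argument is the paper's: you lift the right homotopy $H$ along the cofibration $i$ against the trivial fibration $C^I\to C$ at the endpoint where $H$ agrees with $f\circ i$ (the paper labels that endpoint $p_1$, you label it $p_0$), and take $f'$ to be the other endpoint of the lift. The reduction you flag as delicate is easier than you suggest and needs neither cofibrancy of $A$ nor a special convention: factor the given map $P\to C\times C$ itself (not the diagonal) as a trivial cofibration $j\colon P\to P'$ followed by a fibration, so that $P'$ is a good path object and $j\circ H$ is a right homotopy into it---this is exactly the paper's ``replace $H$ by a good homotopy'' step.
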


\begin{proof}
Let $H \colon A \to C^I$ be a right homotopy such that $p_0 \circ H = g$ and $p_1 \circ H = f \circ i$, where $C^I$ denotes a path object for $C$. By replacing $H$ by a good homotopy $K$ if necessary, we may assume that the projections $p_0,p_1 \colon C^I \to C$ are acyclic fibrations as $C$ is fibrant.

Consider the commutative diagram
\[
\begin{tikzcd}
	A & {C^I} \\
	B & C
	\arrow["H", from=1-1, to=1-2]
	\arrow["i"', hook, from=1-1, to=2-1]
	\arrow["{p_1}", two heads, from=1-2, to=2-2]
	\arrow["K", dashed, from=2-1, to=1-2]
	\arrow["f"', from=2-1, to=2-2]
\end{tikzcd}
\]
By the lifting property, there exists a map $K \colon B \to C^I$ making the diagram commute. Define
\(
f' = p_0 \circ K.
\)
Then $f' \simeq p_1 \circ K = f$, and moreover
\[
f' \circ i = p_0 \circ K \circ i = p_0 \circ H = g,
\]
as required.
\end{proof}
\begin{prop}\label{prop:lifting prop for relative sullivan}
Let the following diagram be homotopy commutative in the category of CDGAs over $\mathbb{Q}$, where $i$ is the inclusion of a relative Sullivan algebra and $p$ is a quasi-isomorphism:
\[
\begin{tikzcd}
	A & B \\
	{A \otimes \Lambda W} & C
	\arrow["\alpha", from=1-1, to=1-2]
	\arrow["i"', hook, from=1-1, to=2-1]
	\arrow["p", from=1-2, to=2-2]
	\arrow["\beta"', from=2-1, to=2-2]
\end{tikzcd}
\]
Then there exists a CDGA morphism
\[
\gamma \colon A \otimes \Lambda W \longrightarrow B
\]
such that
\[
p \circ \gamma \simeq \beta
\qquad\text{and}\qquad
\gamma \circ i = \alpha.
\]
\end{prop}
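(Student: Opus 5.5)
The plan is to combine the standard homotopy lifting lemma for relative Sullivan algebras with \cref{lem: make homotopy to equality}. In the usual model structure on CDGAs over $\mathbb{Q}$ every object is fibrant, and the inclusion $i\colon A\to A\otimes\Lambda W$ of a relative Sullivan algebra is a cofibration. First I would replace $p$ by a surjective quasi-isomorphism. To do this, factor $p$ as $B\xrightarrow{j} B\otimes E\xrightarrow{q} C$, where $E$ is a contractible free CDGA on $\{c, dc\}$ with $c$ ranging over a basis of $C$, $j$ is the inclusion (a quasi-isomorphism), and $q = p\cdot\epsilon$ is surjective. Then $q$ is a surjective quasi-isomorphism, hence an acyclic fibration. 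Moreover $j$ has a retraction $r\colon B\otimes E\to B$ that kills $E^+$, and $r$ is itself a quasi-isomorphism.

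Next I would produce a map $\gamma_1\colon A\otimes\Lambda W\to B$ with $p\gamma_1\simeq\beta$, at first ignoring the strict condition on $A$. Homotopy commutativity gives $\beta i\simeq p\alpha = q j\alpha$. By \cref{lem: make homotopy to equality}, applied to the cofibration $i$, the map $\beta$ and the target $C$ (which is fibrant), we may replace $\beta$ by a homotopic $\beta'$ with $\beta' i = q j\alpha$. The square formed by $j\alpha\colon A\to B\otimes E$, the inclusion $i$, the acyclic fibration $q$ and $\beta'$ then commutes strictly. The lifting axiom gives $\lambda\colon A\otimes\Lambda W\to B\otimes E$ with $\lambda i = j\alpha$ and $q\lambda=\beta'$.

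Now set $\gamma = r\lambda$. Then $\gamma i = r j\alpha = \alpha$, which is the required strict equality. For the homotopy condition, note that $j r\simeq \mathrm{id}_{B\otimes E}$, since $r$ is a quasi-isomorphism with section $j$ and the source of $\lambda$ is cofibrant. Hence
\[
p\gamma = p r\lambda = q j r\lambda \simeq q\lambda = \beta'\simeq\beta.
\]
The first equality in the middle uses $p r = q j r$, which holds because $q j = p$.

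The main obstacle is the homotopy $j r\lambda\simeq\lambda$. I would justify it from the standard fact that maps out of a cofibrant object into a fibrant one, which agree up to a quasi-isomorphism, are homotopic. Concretely, $r(jr\lambda) = r\lambda$, and composition with the quasi-isomorphism $r$ induces a bijection on homotopy classes $[A\otimes\Lambda W, -]$ (FHT Prop.~12.9/14.6 style). This needs $A\otimes\Lambda W$ to be cofibrant, which holds if $A$ is cofibrant; in general I would argue with homotopies relative to $A$ instead, using a good path object. Alternatively, one can avoid the issue altogether by quoting the relative lifting lemma in FHT (Lemma 14.4) directly.
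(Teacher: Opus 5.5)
Your proof is correct. It differs from the paper's in the second step.

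\textbf{Comparison with the paper.} Both arguments start the same way: \cref{lem: make homotopy to equality} replaces $\beta$ by $\beta'\simeq\beta$ with $\beta' i=p\alpha$. The paper then quotes the homotopy lifting lemma for relative Sullivan algebras against an \emph{arbitrary} quasi-isomorphism (FHT, Proposition~14.6). That lemma produces $\gamma$ with $\gamma i=\alpha$ and $p\gamma\simeq\beta'$ in one step.

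You instead reprove that lemma. You factor $p=qj$ through the acyclic extension $B\otimes E$ so that $q$ is a surjective quasi-isomorphism, lift strictly against $q$, and transport the lift back along the retraction $r$. This is essentially the standard proof of the cited result. It makes the argument self-contained, using only strict lifting against acyclic fibrations, at the cost of length.

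\textbf{Two points to tighten.}

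First, you justify $jr\lambda\simeq\lambda$ through the bijection on $[A\otimes\Lambda W,-]$, which only covers cofibrant $A$. Your ``homotopies relative to $A$'' alternative is left vague. In fact no cofibrancy is needed, because $jr\simeq\mathrm{id}_{B\otimes E}$ already holds relative to $B$. Define the CDGA map $H\colon B\otimes E\to B\otimes E\otimes\Lambda(t,dt)$ by $H(b)=b$ and $H(c)=c\,t$, hence $H(dc)=d(c\,t)$; it is well defined because $E$ is free on the $c$'s and $dc$'s. Evaluating at $t=0$ gives $jr$, and evaluating at $t=1$ gives the identity. So $(q\otimes\mathrm{id})\circ H\circ\lambda$ is a right homotopy from $p\gamma=qjr\lambda$ to $q\lambda=\beta'$, for any $A$.

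Take $r$ to be induced by the augmentation of $E$ sending every $c$ and $dc$ to $0$. If $C^0\neq\mathbb{Q}$, some generators $c$ have degree $0$, so ``kills $E^+$'' does not by itself define an algebra retraction.

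Second, your fallback citation does not remove the obstacle. A strict lifting lemma against surjective quasi-isomorphisms is what produced $\lambda$ in the first place. The result that settles the statement directly is the homotopy version for arbitrary quasi-isomorphisms, FHT Proposition~14.6, and quoting it is exactly what the paper does.
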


\begin{proof}
By \cref{lem: make homotopy to equality}, there exists a map $\beta' \simeq \beta$ such that
\(
\beta' \circ i = p \circ \alpha.
\)
Applying the lifting lemma for relative Sullivan algebras \cite[Proposition~14.6]{FelixHalperinThomas2001}, we obtain a morphism
\[
\gamma \colon A \otimes \Lambda W \longrightarrow B
\]
satisfying $\gamma \circ i = \alpha$ and
\[
p \circ \gamma \simeq \beta' \simeq \beta.
\]
\end{proof}

\subsection{Relative sectional category}
The notion of \emph{relative sectional category} was introduced by Gonz\'alez, Grant, Vandembroucq \cite{gonzalez2019hopf}. Its properties and its relationship with various notions of topological complexity of a map were further studied  by Calcines \cite{Calc24}. We recall the definition.
\begin{definition}
Let $K\xra{f} B\xla p E $ be two maps. The sectional category of $p$ with respect to $f$ is the least integer $m \leq \infty$ such that $K$ admits an open cover $\{U_i\}_{i=0}^m$ with the property that, for each $i$, there exists a map $s_i \colon U_i \to E$ satisfying
\[
p \circ s_i \simeq f|_{U_i}.
\]
\end{definition}
Note that replacing $f$ and $p$ by their respective rationalizations $f_0$ and $p_0$ yields the inequality
\[
\secat{f_0}{p_0} \leq \secat{f}{p}.
\]
The sectional category of a map $p \colon E \to B$ is recovered as a special case of the relative sectional category by taking $f = \id_B$, that is,
\[
\secat{}{p} = \secat{\id_B}{p}.
\]

Pave\v{s}i\v{c} \cite{Pavesic} originally introduced the topological complexity of a map to study topological properties of kinematic maps in robotics. Because his invariant was not a homotopy invariant, a different, homotopy-invariant approach was subsequently developed by Murillo-Wu and Scott \cite{Murillo_wu, Scott}. Rudyak and Sarkar \cite{Rudyak_soumen} formulated a related definition for the higher topological complexity of a map.
Gonz\'alez and Zapata \cite{zapata2023higher} generalized these ideas and established connection between the various notions.  
\begin{definition}\label{defn: higher tc of map}
Let $f:X\ra Y$ be a continuous map. Let the path space fibration $\pi_n:Y^I\ra Y^n$ be defined by $$\pi_n(\gamma)=\bigg(\gamma(0),\gamma\bigg(\frac{1}{n-1}\bigg), \dots, \gamma\bigg(\frac{n-2}{n-1}\bigg),\gamma(1)\bigg),$$
where $\gamma \in Y^I$ is a path.
The $n$-th (higher) topological complexity of a map $f$ is defined by
    $$\TC_n(f)=\secat{f^n}{\pi_n},$$
    where $f^n:X^n\ra Y^n$ is the map defined by $n$-fold Cartesian product of the map $f.$
\end{definition}
 This notion generalizes the  higher topological complexity of a space \cite{Rudyak2010, Rudyak_Gon_Higher} and  the topological complexity of a map \cite{Scott, Murillo_wu}. Furthermore, Gonz\'alez and Zapata 
\cite[Proposition 3.10]{zapata2023higher} established equivalence between the different versions of the  higher topological complexity of maps.

\subsubsection{Algebraic characterization}
We now recall a Whitehead-type characterization of the sectional category. Through this characterization, we make the transition from spaces to algebras.

Let $j \colon X \to Y$ be a cofibration. The $m$-th \emph{fat wedge} of $j$, denoted by $T^m(j)$, is the subspace of $Y^{m+1}$ consisting of all points $(y_0,\dots,y_m)$ such that $y_i \in j(X)$ for at least one $i \in \{0,1,\dots,m\}$. Let
\[
W_m(j) \colon T^m(j) \hookrightarrow Y^{m+1}
\]
denote the inclusion. The notion of the fat wedge can be generalized to an arbitrary map $p \colon E \to B$; see \cite[Definition~6]{Cal_Vandembroucq}.

\begin{theorem}[\cite{FassoVelenik2002,Cal_Vandembroucq}]
Let $p \colon E \to B$ be a map. Then $\secat{}{p}$ is the least non-negative integer $m\leq \infty$ such that the diagonal map
\[
\Delta_{m+1} \colon B \longrightarrow B^{m+1}
\]
factors through the $m$-th fat wedge $T^m(p)$ up to homotopy. That is, there exists a map $s \colon B \to T^m(p)$ such that the diagram
\[
\begin{tikzcd}
	& {T^m(p)} \\
	B & {B^{m+1}}
	\arrow["{W_m(p)}", from=1-2, to=2-2]
	\arrow["s", dashed, from=2-1, to=1-2]
	\arrow["{\Delta_{m+1}}"', from=2-1, to=2-2]
\end{tikzcd}
\]
is homotopy commutative, that is, $W_m(p)\circ s \simeq \Delta_{m+1}$.
\end{theorem}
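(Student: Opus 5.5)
We prove the two implications separately, after reducing to the case where $p$ is a fibration. Replacing $p$ by a fibration changes neither $\secat{}{p}$ nor the homotopy type of $T^m(p)\to B^{m+1}$. Indeed, the fat wedge of a general map in \cite[Definition~6]{Cal_Vandembroucq} is constructed so as to be a homotopy invariant of $p$ over $B^{m+1}$. So we may assume $p\colon E\to B$ is a fibration, with $B$ a CW complex or at least paracompact. Dually, we may also replace $p$ by a cofibration and use the ordinary fat wedge $T^m(p)\subseteq B^{m+1}$. The argument below is phrased with whichever model is convenient: the open-set side uses fibrations, and the fat wedge side uses the join/pushout description.

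\emph{Step 1: $\secat{}{p}\le m$ implies that $\Delta_{m+1}$ factors.} Let $\{U_i\}_{i=0}^m$ be an open cover of $B$ with local homotopy sections $s_i$. Since $p$ is a fibration, these can be taken to be strict sections. Take a partition of unity $\{\lambda_i\}$ subordinate to the cover, and for each $i$ a homotopy $H_i$ on $U_i$ from $\mathrm{id}$ to $p\circ s_i$; alternatively, work in the mapping cylinder model where $E\hookrightarrow B$ is a cofibration. Define $s\colon B\to B^{m+1}$ coordinatewise by $s(b)_i=H_i(b,\min(1,2\lambda_i(b)\cdot\text{const}))$, pushing the $i$-th coordinate into $p(E)$ wherever $\lambda_i$ is large. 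Since $\max_i\lambda_i(b)\ge 1/(m+1)$, some coordinate always lies in $p(E)$, so $s$ lands in $T^m(p)$. Moreover $s\simeq\Delta_{m+1}$ by shrinking all the parameters linearly to $0$. This is the standard Ganea-type argument.

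\emph{Step 2: a factorization of $\Delta_{m+1}$ implies $\secat{}{p}\le m$.} Let $s\colon B\to T^m(p)$ satisfy $W_m(p)\circ s\simeq\Delta_{m+1}$. Since $W_m(p)$ is a cofibration and $B^{m+1}$ is a CW complex, we can use the homotopy extension property, or the lifting lemma \cref{lem: make homotopy to equality}, to deform $s$ so that the coordinates of $s(b)$ are close to $(b,\dots,b)$ through a fixed homotopy $G$. Next, choose a neighbourhood $N$ of $p(E)$ in $B$ that deformation retracts onto $p(E)$; this exists by the cofibration, i.e.\ NDR-pair, property. Set $U_i=\{b: s(b)_i\in N\}$. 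These sets are open, and they cover $B$ because $s(b)\in T^m(p)$. On $U_i$, the composite of $\mathrm{pr}_i\circ s$ with the retraction $N\to p(E)\simeq E$ gives a map $\sigma_i\colon U_i\to E$. Then
\[
p\circ\sigma_i\simeq \mathrm{pr}_i\circ s|_{U_i}\simeq \mathrm{pr}_i\circ\Delta_{m+1}|_{U_i}=\mathrm{id}_{U_i},
\]
so $\sigma_i$ is a local homotopy section.

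\emph{Minimality and the main obstacle.} Steps 1 and 2 together show that the two least integers coincide, which gives the theorem. The main technical obstacle is Step 2 for an arbitrary map $p$: we need a neighbourhood $N$ of the image that retracts, compatibly with the homotopy to $E$, and this forces the reduction to a cofibration or NDR-pair model. We would handle it by using the generalized fat wedge of \cite{Cal_Vandembroucq}, built as an iterated homotopy pushout or join. In that model, the open sets are preimages of the open "cones" of the join, exactly as in the classical Ganea/Whitehead equivalence for $\cat$.
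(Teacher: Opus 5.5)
The paper gives no proof of this statement; it is quoted from \cite{FassoVelenik2002,Cal_Vandembroucq}, so there is no internal argument to compare with. Judged on its own, your proposal is the classical direct Whitehead-type proof. It is correct once both steps are carried out in the cofibration model $j\colon E\hookrightarrow M_p$, $r\colon M_p\xrightarrow{\simeq}B$, but several sentences need repair.

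\textbf{Step 1.} The fibration remarks must go. The subspace $\{(b_0,\dots,b_m): b_i\in p(E)\text{ for some } i\}$ is all of $B^{m+1}$ as soon as $p$ is surjective, which holds, e.g., for a fibration with nonempty total space over a path-connected base. Moreover, with strict sections your $H_i$ are constant, so Step~1 as first written just returns $s=\Delta_{m+1}$ and proves nothing.

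In the cofibration model, a local homotopy section over $U_i\subseteq B\subseteq M_p$ is exactly a homotopy $H_i\colon U_i\times I\to M_p$ from the inclusion to a map into $E$. Define $s(b)_i=H_i\bigl(b,\min(1,(m+1)\lambda_i(b))\bigr)$ on $U_i$ and $s(b)_i=b$ off $\mathrm{supp}\,\lambda_i$. This is continuous because $\mathrm{supp}\,\lambda_i\subseteq U_i$ is closed, and it gives the factorization; normality of $B$ suffices for the partition of unity.

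\textbf{Step 2.} Delete the sentence about deforming $s$. Homotopy extension and \cref{lem: make homotopy to equality} deform maps \emph{out of} the target of a cofibration. Here you would have to lift a homotopy through $W_m$, which would require $W_m$ to be a fibration. Also, a strict equality $W_m\circ s=\Delta_{m+1}$ would force $E=M_p$. You never use this step anyway, since $\mathrm{pr}_i\circ W_m\circ s\simeq \mathrm{pr}_i\circ\Delta_{m+1}=\mathrm{id}$ holds directly.

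Likewise, $N$ need not deformation retract within itself. A Str\o m structure $(u,h)$ for the closed cofibration gives $N=u^{-1}[0,1)$ and $h\colon M_p\times I\to M_p$ rel $E$ with $h_1(N)\subseteq E$. Then $\sigma_i=h_1\circ \mathrm{pr}_i\circ s|_{U_i}$ works. Finally, record the short check that the factorization property transfers along $r$ (compose with $r^{m+1}$ and a homotopy inverse of $r$), and restrict the cover of $M_p$ to $B$.

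\textbf{Comparison with the join route.} There is a more homotopy-theoretic route, natural given the join-type fat wedge of \cite{Cal_Vandembroucq}. The homotopy pullback of $W_m(p)$ along $\Delta_{m+1}$ is the $(m+1)$-fold fibrewise join $E*_B\cdots*_B E\to B$. This follows from Mather's cube theorem applied to the punctured cube of products $X_0\times\cdots\times X_m\to B^{m+1}$ with $X_i\in\{E,B\}$ and at least one $X_i=E$, whose homotopy colimit is $T^m(p)$. Hence a homotopy factorization of $\Delta_{m+1}$ through $W_m(p)$ is the same as a homotopy section of the join map, and by Schwarz's theorem that is equivalent to $\secat{}{p}\le m$. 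That route treats arbitrary $p$ uniformly and needs no NDR neighbourhoods, but it delegates the partition-of-unity work to Schwarz's theorem and relies on the cube theorem. Your route is self-contained and elementary, at the cost of the point-set bookkeeping above.
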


By \cite[Theorem~1]{FelixTanre2009}, if $\psi \colon \mathcal B \to \mathcal E$ is a surjective CDGA model of a map $p_0 \colon E_0 \to B_0$, then the fat wedge inclusion
\[
W_m(p_0) \colon T^m(p_0) \to B_0^{m+1}
\]
is modeled by the quotient morphism
\[
q_m \colon \mathcal B^{\otimes (m+1)} \longrightarrow 
\mathcal B^{\otimes (m+1)} / \mathcal N^{\otimes (m+1)},
\]
where $\mathcal N = \ker \psi$. Moreover, the multiplication map
\[
\mu_{m+1} \colon \mathcal B^{\otimes (m+1)} \longrightarrow \mathcal B
\]
given by $\mu_{m+1}(b_1\otimes b_2 \otimes \dots\otimes b_{m+1})=b_1b_2\cdots b_{m+1}$,
models the diagonal map $\Delta_{m+1}$. This leads to the following algebraic notion of the sectional category for CDGA morphisms, introduced in \cite[Definition~4]{Carrasquel2015}.
\begin{definition}
Let $\psi \colon \mathcal B \to \mathcal E$ be a surjective CDGA morphism. The sectional category of $\psi$, denoted by $\secat{}{\psi}$, is the smallest non-negative integer $m\leq \infty$ such that there exists a CDGA morphism $\tau$ making the diagram
\[
\begin{tikzcd}
	{\mathcal B^{\otimes (m+1)}} 
	& {\mathcal B^{\otimes (m+1)} / \mathcal N^{\otimes (m+1)}} \\
	{\mathcal B} 
	& {\mathcal B^{\otimes (m+1)} \otimes \Lambda W_m}
	\arrow["{q_m}", from=1-1, to=1-2]
	\arrow["{\mu_{m+1}}"', from=1-1, to=2-1]
	\arrow["{i_m}"', hook, from=1-1, to=2-2]
	\arrow["\sim"', from=2-2, to=1-2]
	\arrow["\tau", dashed, from=2-2, to=2-1]
\end{tikzcd}
\]
commute, where $i_m \colon \mathcal B^{\otimes (m+1)} \to \mathcal B^{\otimes (m+1)} \otimes \Lambda W_m$ is a relative Sullivan model of $q_m$.
\end{definition}

It is shown in \cite[Theorem~7]{Carrasquel2015} that this algebraic definition mirrors the topological notion of the sectional category.

\begin{theorem}
If $\psi$ is a surjective CDGA model for a map $f$, then
\[
\secat{}{f_0} = \secat{}{\psi}.
\]
\end{theorem}

\section{Rational relative sectional category}

The Whitehead-type characterization of the sectional category extends to the relative setting, as shown in \cite[Proposition~4.4]{Calc24}.

\begin{theorem}
Let $K \xrightarrow{f} B \xleftarrow{p} E$ be maps. Then $\secat{f}{p}$ is the least integer $m$ such that the composite
\[
\Delta_{m+1} \circ f \colon K \longrightarrow B^{m+1}
\]
factors through the $m$-th fat wedge $T^m(p)$ up to homotopy. That is, there exists a map
\[
\gamma  \colon K \longrightarrow T^m(p)
\]
such that the diagram
\begin{equation}\label{fig: relative secat}
\begin{tikzcd}
	&& {T^m(p)} \\
	K & B & {B^{m+1}}
	\arrow["{W_m(p)}", from=1-3, to=2-3]
	\arrow["\gamma ", dashed, from=2-1, to=1-3]
	\arrow["f"', from=2-1, to=2-2]
	\arrow["{\Delta_{m+1}}"', from=2-2, to=2-3]
\end{tikzcd}
\end{equation}
is homotopy commutative, that is,
\[
W_m(p) \circ \gamma  \simeq \Delta_{m+1} \circ f.
\]
\end{theorem}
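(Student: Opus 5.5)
The plan is to prove the two inequalities separately, using that $W_m(p)\colon T^m(p)\to B^{m+1}$ can be taken to be a fibration. In the generalized fat wedge construction of \cite[Definition~6]{Cal_Vandembroucq}, $p$ is first replaced by a fibration and $T^m(p)$ is formed as a join-type homotopy limit; replacing $W_m(p)$ by an equivalent fibration if necessary does not change either side of the statement. Because of this, homotopy factorizations through $W_m(p)$ can be strictified. By a standard homotopy-invariance argument for $\secat{f}{p}$, we may also assume that $p$ itself is a fibration.

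\emph{Step 1 (cover gives factorization).} Suppose $K=U_0\cup\dots\cup U_m$ with maps $s_i\colon U_i\to E$ and homotopies $p s_i\simeq f|_{U_i}$. Since $p$ is a fibration, we can lift these homotopies and assume $p s_i=f|_{U_i}$ strictly. Pass to a locally finite partition of unity $\{\lambda_i\}$ subordinate to the cover; this needs $K$ paracompact or normal, which holds for CW complexes. Define $\gamma\colon K\to T^m(p)$ coordinatewise: in coordinate $i$, use the point $f(x)$ together with the "distance parameter" $\lambda_i(x)$. When $\lambda_i(x)>0$, the lift $s_i(x)\in E$ certifies that this coordinate lies in the image of $p$. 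This is exactly the Ganea/join-style model of the fat wedge. The homotopy $W_m(p)\circ\gamma\simeq\Delta_{m+1}\circ f$ comes from contracting the mapping-cylinder parameters. For the cofibration model $j$, one checks that $T^m(j)$ is a homotopy pushout of cubes, and then the cover $\{\lambda_i>0\}$ gives the factorization directly.

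\emph{Step 2 (factorization gives cover).} Conversely, take $\gamma$ with $W_m(p)\gamma\simeq\Delta_{m+1} f$. Since $W_m(p)$ is a fibration, we can make this an equality. Let $T_i\subset T^m(p)$ be the open subset of points whose $i$-th coordinate lies in a neighbourhood of the "image of $p$" part, and set $U_i=\gamma^{-1}(T_i)$. These sets cover $K$ because every point of $T^m(p)$ has some coordinate in the image part. On $U_i$, the $i$-th projection of $\gamma$ deforms into $E$, giving $s_i\colon U_i\to E$ with $p s_i\simeq \mathrm{pr}_i W_m(p)\gamma|_{U_i}=f|_{U_i}$.

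The main obstacle is the point-set work in both directions. In Step 1 it is making $\gamma$ continuous from the partition of unity. In Step 2 it is ensuring that $T_i$ is open and deformation-retracts onto the $i$-th "image" stratum compatibly with the projections. The first thing I would try is to work throughout with the explicit join model $E*_B\cdots*_B E\to B$, using the classical equivalence between secat and sections of the fibrewise join. Then $\secat{f}{p}\le m$ holds if and only if the pullback of the $(m+1)$-fold fibrewise join along $f$ has a homotopy section. The fat wedge statement then follows from the equivalence, in the relative form of \cite{Cal_Vandembroucq}, between the fibrewise join and the homotopy fibre of $W_m(p)$ over the diagonal.
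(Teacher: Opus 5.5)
The paper gives no argument of its own for this statement; it quotes it from \cite[Proposition~4.4]{Calc24}. Your join-based route in the last paragraph is correct and is the standard way to prove it.

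\begin{enumerate}
\item For a fibration $p$ one has $\secat{f}{p}=\secat{}{f^\ast p}$: lift the homotopies $ps_i\simeq f|_{U_i}$ to strict equalities.
\item Schwarz's criterion, applied on the normal space $K$, says $\secat{}{f^\ast p}\le m$ exactly when the $(m+1)$-fold fibrewise join of $f^\ast p$ has a section. That join is the pullback along $f$ of the fibrewise join $J_m(p)\to B$, so the condition is a homotopy lift of $f$ through $J_m(p)$.
\item $J_m(p)\to B$ is the homotopy pullback of $W_m(p)$ along $\Delta_{m+1}$. To see this, pull back the punctured-cube homotopy colimit defining $T^m(p)$ along the diagonal and apply Mather's cube theorem; this is the identification underlying \cite{Cal_Vandembroucq}.
\item The universal property of the homotopy pullback then exchanges homotopy lifts of $f$ through $J_m(p)$ with maps $\gamma$ satisfying $W_m(p)\circ\gamma\simeq\Delta_{m+1}\circ f$, in both directions.
\end{enumerate}

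Two corrections to your wording. You need the homotopy \emph{pullback} of $W_m(p)$ along $\Delta_{m+1}$, not a homotopy fibre. And no ``relative form'' of the Calcines--Vandembroucq identification is required: it concerns $p$ alone, and $f$ enters only through the universal property. Compared with the bare citation, your argument shows that the relative Whitehead-type characterization follows formally from the absolute one.

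Once you take this route, Steps~1 and~2 are superfluous. As written they blur the fibrewise join (a space over $B$) with the fat wedge (a space over $B^{m+1}$). They can be made rigorous in the mapping-cylinder model of $T^m(p)$, provided you normalise the partition of unity by $\max_j\lambda_j$ so that at least one coordinate lands exactly in $E$.
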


Recall that if $\psi \colon \mathcal B \to \mathcal E$ is a surjective CDGA model of a map $p_0 \colon E_0 \to B_0$, then the fat wedge inclusion
\[
W_m(p_0) \colon T^m(p_0) \longrightarrow B_0^{m+1}
\]
is modeled by the quotient morphism
\[
q_m \colon \mathcal B^{\otimes (m+1)} \longrightarrow 
\mathcal B^{\otimes (m+1)} / \mathcal N^{\otimes (m+1)},
\]
where $\mathcal N = \ker \psi$. Moreover, the multiplication map
\[
\mu_{m+1} \colon \mathcal B^{\otimes (m+1)} \longrightarrow \mathcal B
\]
models the diagonal map $\Delta_{m+1}$.
We now provide an algebraic definition of the relative sectional category which will be used to study the relative sectional category in the rational context.
\begin{definition}
Let $\varphi: \mathcal B \xrightarrow{} \mathcal K,
$
$ \psi:\mathcal B \xrightarrow{} \mathcal E
$
be CDGA morphisms with $\psi$ surjective. The \emph{$\varphi$-sectional category of $\psi$}, denoted by $\secat{\varphi}{\psi}$, is the smallest non-negative integer $m\leq \infty$ such that there exists a CDGA morphism $\tau$ making the diagram
\begin{equation}\label{fig: algebraic relsecat}
\begin{tikzcd}
	{\mathcal B^{\otimes (m+1)}} 
	& {\mathcal B^{\otimes (m+1)} / \mathcal N^{\otimes (m+1)}} \\
	{\mathcal B} 
	& {\mathcal B^{\otimes (m+1)} \otimes \Lambda W_m} \\
	{\mathcal K}
	\arrow["{q_m}", from=1-1, to=1-2]
	\arrow["{\mu_{m+1}}"', from=1-1, to=2-1]
	\arrow["{i_m}"', hook, from=1-1, to=2-2]
	\arrow["\varphi"', from=2-1, to=3-1]
	\arrow["\sim"', from=2-2, to=1-2]
	\arrow["\tau", dashed, from=2-2, to=3-1]
\end{tikzcd}
\end{equation}
commute, where $i_m \colon \mathcal B^{\otimes (m+1)} \to \mathcal B^{\otimes (m+1)} \otimes \Lambda W_m$ is a relative Sullivan model of $q_m$ and $\mathcal N = \ker \psi$.
\end{definition}

The cohomology rings provide various lower bounds for the sectional category, explicitly given by the nilpotency index. We now recall these bounds and examine the relationships among them.
\begin{definition}
For an ideal $I$ in a graded algebra, the \emph{nilpotency} of $I$, denoted by $\nil(I)$, is the smallest non-negative integer $m$ (or infinity) such that $I^{m+1} = 0$. 
\end{definition}
Consider the pullback square associated to maps $f$ and $p$ of topological spaces:
\[
\begin{tikzcd}
	{f^\ast(E)} & E \\
	K & B
	\arrow["g", from=1-1, to=1-2]
	\arrow["q"', from=1-1, to=2-1]
	\arrow["p", from=1-2, to=2-2]
	\arrow["f"', from=2-1, to=2-2]
\end{tikzcd}
\]
If $p$ is a fibration, then
\[
\secat{f}{p} = \secat{}{q}.
\]
In this case, the classical cohomological lower bound for sectional category, as shown in \cite{Schwarz1966}, is given by
\[
\nil(\ker q^\ast)\leq \secat{}{q}.
\]
In many situations, this bound is sharp. On the other hand, for the relative sectional category, another lower bound
\begin{equation}\label{eq: lb for relsecat}
    \nil\bigl(f^\ast(\ker p^\ast)\bigr) \leq\secat{f}{p}
\end{equation}
was obtained in \cite[Proposition~3.8(5)]{gonzalez2019hopf} by considering the maps $f$ and $p$ separately. We now compare these two bounds.

\begin{prop}
\[
\nil\bigl(f^\ast(\ker p^\ast)\bigr) \leq \nil(\ker q^\ast).
\]
\end{prop}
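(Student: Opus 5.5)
The plan is to prove the inequality by showing that the ideal $f^\ast(\ker p^\ast)$ is contained in $\ker q^\ast$. For ideals $I\subseteq J$ in a graded algebra we have $I^{m+1}\subseteq J^{m+1}$ for every $m$. Hence $J^{m+1}=0$ forces $I^{m+1}=0$, which gives $\nil(I)\leq \nil(J)$. Applying this with $I=f^\ast(\ker p^\ast)$ and $J=\ker q^\ast$ yields the claim.

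One point needs care first: $f^\ast(\ker p^\ast)$ is only the image of an ideal under a ring map, so it need not itself be an ideal of $H^\ast(K)$. I would read $\nil$ for it as the smallest $m$ such that every product of $m+1$ of its elements vanishes, or equivalently work with the ideal it generates. Both readings give the same nilpotency, because $H^\ast(K)$ is graded commutative and an $(m+1)$-fold product of generated elements expands into multiples of $(m+1)$-fold products of the generators. In either case the containment argument above goes through unchanged.

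The containment follows from the pullback square. Since $p\circ g=f\circ q$, taking cohomology gives $g^\ast\circ p^\ast=q^\ast\circ f^\ast$. For $x\in\ker p^\ast$ this yields
\[
q^\ast\bigl(f^\ast(x)\bigr)=g^\ast\bigl(p^\ast(x)\bigr)=g^\ast(0)=0,
\]
so $f^\ast(x)\in\ker q^\ast$. Since $\ker q^\ast$ is an ideal, the ideal generated by $f^\ast(\ker p^\ast)$ also lies in $\ker q^\ast$.

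There is essentially no obstacle here. The only subtle step is the remark that $f^\ast(\ker p^\ast)$ need not be an ideal, which is settled by passing to the ideal it generates as described. The fibration hypothesis on $p$ is not needed, only commutativity of the square; in particular it does not matter whether $f^\ast(E)$ is taken as a strict or homotopy pullback.
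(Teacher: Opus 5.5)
Your proposal is correct and follows the same argument as the paper: commutativity of the pullback square gives $q^\ast f^\ast = g^\ast p^\ast$, hence $f^\ast(\ker p^\ast)\subseteq \ker q^\ast$, and nilpotency is monotone under inclusion. Your extra remark, that $f^\ast(\ker p^\ast)$ need not be an ideal but passing to the ideal it generates does not change the nilpotency, is a valid clarification that the paper leaves implicit.
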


\begin{proof}
Let $\alpha \in \ker p^\ast$, so that $p^\ast(\alpha)=0$. Then
\[
q^\ast f^\ast(\alpha) = g^\ast p^\ast(\alpha) = 0,
\]
which implies that $f^\ast(\alpha) \in \ker q^\ast$. Hence,
\[
f^\ast(\ker p^\ast) \subset \ker q^\ast,
\]
and therefore
\[
\nil\bigl(f^\ast(\ker p^\ast)\bigr) \leq \nil(\ker q^\ast).
\]
\end{proof}

We now provide an example showing that the inequality above can be strict. Note that to illustrate this general topological phenomenon, we temporarily work with $\mathbb{Z}_2$ coefficients rather than $\mathbb{Q}$.

\begin{example}\label{ex: contrast to rational version}
Consider the following pullback square:
\[
\begin{tikzcd}
	{f^\ast(P_1(\mathbb{R}P^2))} & {P_1(\mathbb{R}P^2)} \\
	{S^2} & {\mathbb{R}P^2}
	\arrow["g", from=1-1, to=1-2]
	\arrow["q"', from=1-1, to=2-1]
	\arrow["p", from=1-2, to=2-2]
	\arrow["f"', from=2-1, to=2-2]
\end{tikzcd}
\]
Here $f \colon S^2 \to \mathbb{R}P^2$ is the double covering map, and
\[
P_1(\mathbb{R}P^2)=\{\gamma \in (\mathbb{R}P^2)^I \mid \gamma(1)=\ast\}
\]
is the based path space, with $p(\gamma)=\gamma(0)$. Observe that this diagram represents the homotopy pullback of the constant map along $f$. Since $f$ is a fibration, the space $f^\ast(P_1(\mathbb{R}P^2))$, which is the homotopy fiber of $f$, is homotopy equivalent to the fiber of $f$, namely $S^0$.

The induced map in cohomology
\[
f^\ast \colon H^\ast(\mathbb{R}P^2;\mathbb{Z}_2) \longrightarrow H^\ast(S^2;\mathbb{Z}_2)
\]
can be identified with the zero map
\[
\mathbb{Z}_2[a]/(a^2) \longrightarrow \mathbb{Z}_2[x]/(x^2),
\]
where $|a|=1$ and $|x|=2$. Moreover, the maps $p^\ast$ and $q^\ast$ are trivial, since the domains of both $p$ and $q$ are homotopy equivalent to finite discrete spaces.

Consequently,
\(
f^\ast(\ker p^\ast)=0,
\)
and hence
\(
\nil\bigl(f^\ast(\ker p^\ast)\bigr)=0.
\)
On the other hand, $\ker q^\ast = (x)$, and therefore
\(
\nil(\ker q^\ast)=1.
\)
This shows that
\[
\nil\bigl(f^\ast(\ker p^\ast)\bigr) < \nil(\ker q^\ast)
\]
for this example. In particular, we have $\nil\bigl(f^\ast(\ker p^\ast)\bigr)<\secat{f}{p}$.
\end{example}

We establish a relation between the relative sectional category at the space level and at the CDGA level. We extend the ideas of \cite[Theorem 7]{Carrasquel2015}. The presence of the new map $f$ introduces additional complexity to the proof.
\begin{theorem}\label{thm: secat top vs algebra}
    Let $f: K\to B$ and $p: E\to B.$ be two maps.
    Let $\psi:\c B\to \c E$ be a surjective model for $p_0$ and $\varphi: \c B\to \c K$ be any model for $f_0$.
     Then, $$\mathrm{secat}_{f_0}(p_0)=\mathrm{secat}_{\varphi}(\psi).$$
\end{theorem}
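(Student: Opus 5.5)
We prove the two inequalities separately, following the strategy of \cite[Theorem~7]{Carrasquel2015} and using the Whitehead-type characterization of $\secat{f}{p}$ from \cite[Proposition~4.4]{Calc24}, applied to the rational maps $f_0$ and $p_0$. By \cite[Theorem~1]{FelixTanre2009}, $q_m$ models $W_m(p_0)$ and $\mu_{m+1}$ models $\Delta_{m+1}$. We first check that $\secat{\varphi}{\psi}$ does not depend on the chosen models. Given two surjective models of $p_0$ and two models of $f_0$, we connect them by zig-zags of quasi-isomorphisms. The quotient $\mathcal B^{\otimes(m+1)}/\mathcal N^{\otimes(m+1)}$ and the relative Sullivan model $i_m$ are functorial with respect to these comparisons. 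We can then transport a factorization $\tau$ along the zig-zag, using \cref{prop:lifting prop for relative sullivan} to replace each homotopy-commutative square by a strictly commuting one. This lets us fix $\mathcal B=\Lambda V$ to be a Sullivan minimal model of $B_0$. We take $\varphi\colon \Lambda V\to \c K$ to be a Sullivan model of $f_0$, and $\psi$ to be a surjective model obtained by the standard surjective trick, namely tensoring with a contractible algebra.

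\textbf{Step 1: $\secat{\varphi}{\psi}\le \secat{f_0}{p_0}$.} Let $m=\secat{f_0}{p_0}$ and let $\gamma\colon K_0\to T^m(p_0)$ satisfy $W_m(p_0)\gamma\simeq\Delta_{m+1}f_0$. Applying $A_{PL}$ and passing through the Sullivan model $\mathcal B^{\otimes(m+1)}\otimes\Lambda W_m$ of $T^m(p_0)$, we obtain a homotopy-commutative square. Its left edge is $i_m\colon \mathcal B^{\otimes(m+1)}\to\mathcal B^{\otimes(m+1)}\otimes\Lambda W_m$, its right edge is a quasi-isomorphism onto $A_{PL}(K_0)$ (or onto $\c K$, via the model of $f_0$), its top edge is $\varphi\mu_{m+1}$, and its bottom edge is a model of $\gamma$. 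Here the correspondence between homotopy classes of maps of rational spaces and homotopy classes of CDGA maps out of Sullivan algebras is used. \cref{prop:lifting prop for relative sullivan} then yields $\tau$ with $\tau i_m=\varphi\mu_{m+1}$ strictly, which is exactly diagram \eqref{fig: algebraic relsecat}. One subtlety is that $\c K$ itself need not be the target of a quasi-isomorphism from $A_{PL}(K_0)$. To handle this, we work with a Sullivan model $\Lambda Z\xrightarrow{\sim}\c K$ and compose, choosing the models of $f_0$ compatibly via the lifting lemma.

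\textbf{Step 2: $\secat{f_0}{p_0}\le\secat{\varphi}{\psi}$.} Given $\tau$ with $\tau i_m=\varphi\mu_{m+1}$, apply Sullivan's spatial realization functor $\langle-\rangle$. Since every space involved is rational, simply connected and of finite type, $\langle\mathcal B^{\otimes(m+1)}\otimes\Lambda W_m\rangle\simeq T^m(p_0)$ and $\langle \mathcal B^{\otimes(m+1)}\rangle\simeq B_0^{m+1}$, with $\langle i_m\rangle$ corresponding to $W_m(p_0)$. Likewise $\langle\varphi\mu_{m+1}\rangle$ corresponds to $\Delta_{m+1}f_0$, after first replacing $\c K$ by its minimal model so that realization recovers $K_0$. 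Then $\langle\tau\rangle$ is the required map $\gamma\colon K_0\to T^m(p_0)$, and the strict equality $\tau i_m=\varphi\mu_{m+1}$ gives $W_m(p_0)\gamma\simeq\Delta_{m+1}f_0$.

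The main obstacle is the bookkeeping forced by $\varphi$ being an arbitrary model rather than a Sullivan one. In both directions the maps $\mu_{m+1}$ and $\varphi$ must be replaced coherently by maps between Sullivan algebras, so that homotopy classes on the two sides correspond. The homotopies must then be rigidified to strict equalities without destroying the compatibility with $f_0$. This is where \cref{lem: make homotopy to equality} and \cref{prop:lifting prop for relative sullivan} are used, applied twice: once for $\mathcal B$ and once for $\c K$.
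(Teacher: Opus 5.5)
Your argument is correct in outline and shares its core with the paper's proof. For $\secat{\varphi}{\psi}\le\secat{f_0}{p_0}$ the paper also lifts into a minimal model $\Lambda U\xrightarrow{\sim}\c K$ via \cref{prop:lifting prop for relative sullivan}, composes with $\beta\colon\Lambda U\to\c K$, and rigidifies with \cref{lem: make homotopy to equality}. It likewise obtains the converse by spatial realization.

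The real difference is how an arbitrary $\c B$ is handled. You front-load a model-independence lemma and then work over a Sullivan $\c B$. The paper never changes $\psi$. It takes a relative Sullivan model $i$ of $q_m\circ(\otimes\alpha)$ over $\Lambda V^{\otimes(m+1)}$, where $\alpha\colon\Lambda V\xrightarrow{\sim}\c B$, and lifts there to get $\tilde\gamma$ with $\tilde\gamma i=\tilde\varphi\mu'_{m+1}$. It then recovers $i_m$ as the pushout of $i$ along $\otimes\alpha$; this is still a relative Sullivan model of $q_m$, by base change along a quasi-isomorphism. Finally it obtains $\tau$ from the homotopy pushout together with \cref{lem: make homotopy to equality}. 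That pushout is essentially the forward half of your transport along a quasi-isomorphism on $\c B$, used once, so the paper never has to compare two different surjective models.

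Your route buys more. It shows that $\secat{\varphi}{\psi}$ is an invariant of the weak equivalence class of the span $\c K\leftarrow\c B\to\c E$. It also makes the converse explicit, since realization recovers $K_0$ and $T^m(p_0)$ only from Sullivan algebras, which the paper's one-line converse leaves implicit.

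The price is that your lemma carries more content than your sketch admits. The zig-zags in the paper's definition of weak equivalence pass through minimal models, which are not surjective, so $\mathcal N^{\otimes(m+1)}$ is not available along them. Moreover, the comparisons must be compatible for $\varphi$ and $\psi$ on a common source. Both points are settled by the surjective trick itself. Tensor a Sullivan model $\Lambda V\xrightarrow{\sim}\c B$ with a contractible free algebra $\Lambda(U\oplus dU)$ mapping onto $\c E$, and send $U$ into $\c B$ through chosen $\psi$-preimages. This gives a strict morphism of surjective spans $(\hat\varphi,\hat\psi)\to(\varphi,\psi)$ that is a quasi-isomorphism on the source and the identity on $\c K$ and $\c E$. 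Along it, $\tau$ transports strictly in both directions: by pushout one way, and by lifting the relative Sullivan model the other way.

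The resulting $\hat{\c B}$ is not minimal. So your ``fix $\c B=\Lambda V$ minimal'' should read ``Sullivan'', which is all your lifting arguments actually use.
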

\begin{proof}
Let $\mathcal N = \ker \psi$. As mentioned earlier, the quotient map \[
q_m \colon \mathcal B^{\otimes (m+1)} \longrightarrow 
\mathcal B^{\otimes (m+1)} / \mathcal N^{\otimes (m+1)},
\] models the map \(
W_m(p_0) \colon T^m(p_0) \to B_0^{m+1}
\).

Let $\alpha: \Lambda V\xra{\sim} \c B$ be the minimal model of $\c B$ and $\xi:\Lambda V\xra{\sim} A_{PL}( B_0^{m+1})$ be the minimal model of $B_0^{m+1}$. Let $i:\Lambda V^{\otimes m+1}\to \Lambda V^{\otimes m+1}\otimes\Lambda W_m$ be the relative Sullivan model of $q_m\circ(\otimes \alpha).$ Then, we have a commutative diagram
$$\begin{tikzcd}[row sep=1.3cm, column sep=2.0cm]
{\mathcal{B}^{\otimes m+1}} \arrow[r, "{q_m}"] & {\frac{\mathcal{B}^{\otimes m+1}}{\mathcal{N}^{\otimes m+1}}} \\
{\Lambda V^{\otimes m+1}} \arrow[u, "{\otimes \alpha}"] \arrow[r, "i"] \arrow[d, "\xi"'] & {\Lambda V^{\otimes m+1}\otimes \Lambda W_m} \arrow[u, "\sim"'] \arrow[d, "\rho"] \\
{A_{PL}(B_0^{ m+1})} \arrow[r, "{A_{PL}{(W_m(p_0))}}"'] & {A_{PL}(T^m(p_0))}
\end{tikzcd}$$
where $\rho$ is a quasi-isomorphism.

Let $\secat{f_0}{p_0}=m$. Then, there exists $\gamma:K_0\ra T^m(p_0)$ such that the following diagram becomes commutative as in  \eqref{fig: relative secat} 
\begin{equation}\label{fig: rational relative secat}
\begin{tikzcd}
	&& {T^m(p_0)} \\
	K_0 & B_0 & {B_0^{m+1}}
	\arrow["{W_m(p_0)}", from=1-3, to=2-3]
	\arrow["\gamma ", dashed, from=2-1, to=1-3]
	\arrow["f_0"', from=2-1, to=2-2]
	\arrow["{\Delta_{m+1}}"', from=2-2, to=2-3]
\end{tikzcd}
\end{equation}
Let $\tilde \varphi:\Lambda V\to \Lambda U$ be a model for $f: K\to  B$ where $\Lambda U$ is minimal, then we have a homotopy commutative diagram
\begin{equation}\label{fig: Model of B,K}
\begin{tikzcd}
	{A_{PL}(B_0)} & \Lambda V & {\c B} \\
	A_{PL}(K_0) & \Lambda U & \c K
	\arrow[ from=1-1, to=2-1, "A_{PL}(f_0)"']
	\arrow["\alpha'"', from=1-2, to=1-1]
	\arrow["\alpha", from=1-2, to=1-3]
	\arrow["\tilde \varphi", from=1-2, to=2-2]
	\arrow["\varphi", from=1-3, to=2-3]
	\arrow[ from=2-2, to=2-1, "\beta'"]
	\arrow[ "\beta"', from=2-2, to=2-3]
\end{tikzcd}    
\end{equation}
where $\alpha', \beta, \beta'$ are quasi-isomorphisms  giving the minimal models of $\c K$ and $K_0$ respectively.

Using the left homotopy commutative square of diagram \eqref{fig: Model of B,K} and applying $A_{PL}$ to diagram \eqref{fig: rational relative secat}, we obtain the following homotopy commutative diagram:
$$\begin{tikzcd}[row sep=1.5cm, column sep=1.8cm]
{} & {} & {\Lambda V} \arrow[r, "{\tilde \varphi}"] \arrow[d, "{\alpha'}"] & {\Lambda U} \arrow[d, "{\beta'}"] \\
{\Lambda V^{\otimes m+1}} \arrow[r, "\xi"] \arrow[d, "i"', hook] \arrow[urr, "{\mu'_{m+1}}", bend left=15] \arrow[drrr, "{A_{PL}(W_mp_0)\circ \otimes \alpha'}"', sloped, pos=0.25] & {A_{PL}(B_0^{ m+1})} \arrow[r, "{A_{PL}(\Delta_{m+1})}"] & {A_{PL}(B_0)} \arrow[r, "{A_{PL}(f_0)}"] & {A_{PL}(K_0)} \\
{\Lambda V^{\otimes m+1}\otimes \Lambda W_m} \arrow[rrr, "\rho"'] & {} & {} & {A_{PL}(T^mp_0)} \arrow[u, "{A_{PL}(\gamma)}"']
\end{tikzcd}$$
where $\mu_{m+1}'$ denotes the multiplication map whose domain is $\Lambda V^{\otimes m+1}$.

 From the above diagram, we extract the following homotopy commutative square
\begin{equation}
\begin{tikzcd}
	{\Lambda V^{\otimes m+1} } & {\Lambda U} \\
	{\Lambda V^{\otimes m+1} \otimes \Lambda W} & {A_{PL}(K_0)}
	\arrow["{\tilde \varphi \mu_{m+1}'}",  from=1-1, to=1-2]
	\arrow["i"', hook, from=1-1, to=2-1]
	\arrow["\sim", from=1-2, to=2-2]
	\arrow["{A_{PL}(\gamma)\rho}"', from=2-1, to=2-2].
\end{tikzcd}    
\end{equation}

 Using \cref{prop:lifting prop for relative sullivan}, we get a lift $\tilde \gamma :\Lambda V^{\otimes m+1}\otimes \Lambda W_m\ra \Lambda U$ such that 
 \begin{equation}\label{eq: auxiliary lift}
     \tilde \varphi \mu'_{m+1}=\tilde \gamma i.
 \end{equation}
 Finally, we have a pushout diagram where $i_m$ is the relative Sullivan algebra given by the pushout of $i$ along $\otimes \alpha$. Since $i$ is a cofibration, the inner square is also a homotopy pushout diagram.
\[\begin{tikzcd}
	{\Lambda V^{\otimes m+1} } & {\mathcal B^{\otimes m+1}} \\
	{\Lambda V^{\otimes m+1} \otimes \Lambda W_m} & {\mathcal B^{\otimes m+1} \otimes \Lambda W_m} \\
	&& {\mathcal K}
	\arrow["{\otimes \alpha}", from=1-1, to=1-2]
	\arrow["i"', hook, from=1-1, to=2-1]
	\arrow["{i_m}", hook, from=1-2, to=2-2]
	\arrow["{\varphi \mu_{m+1}}",  bend left=25, from=1-2, to=3-3]
	\arrow[from=2-1, to=2-2]
	\arrow["{\beta\tilde\gamma}"',  bend right=10, from=2-1, to=3-3]
	\arrow["{\tau'}"', dashed, from=2-2, to=3-3]
\end{tikzcd}\]
 We note that the outer square of the above 
 diagram is homotopy commutative as: 
 \begin{align*}
     \varphi \mu_{m+1}\circ(\otimes\alpha)&=\varphi\alpha\mu'_{m+1}\\
     &\simeq \beta\tilde \varphi\mu'_{m+1} ~~~\text{using diagram \eqref{fig: Model of B,K}}\\
     &=\beta\tilde \gamma i ~~~\text{using \eqref{eq: auxiliary lift}}.
 \end{align*}
 By the universal property, we get $\tau'$ such that $\tau'i_m\simeq \varphi\mu_{m+1}.$ By \cref{lem: make homotopy to equality},
 we get a map 
 $\tau$ such that $\tau i_m=\varphi\mu_{m+1}.$ Therefore, by definition, $\secat{\varphi}{\psi}\leq m$; that is,  $\secat{\varphi}{\psi}\leq \secat{f_0}{p_0}.$
 \par
 The converse follows from the application of the spatial realization functor.
\end{proof}
\begin{remark}Taking the pushout of $\varphi\mu_{m+1}$ along $i_m$, we get a commutative diagram:
\[\begin{tikzcd}
	{\mathcal B^{\otimes m+1}} && {\mathcal B^{\otimes m+1}\otimes \Lambda W_m} & \\
	{\mathcal K} && {\mathcal K\otimes \Lambda W_m} \\
	&&& {\mathcal K}
	\arrow["{i_m}", hook, from=1-1, to=1-3]
	\arrow["{\varphi\mu_{m+1}}"', from=1-1, to=2-1]
	\arrow["\gamma", from=1-3, to=2-3]
	\arrow["\tau", bend left=20, from=1-3, to=3-4]
	\arrow["{j_m}", from=2-1, to=2-3]
	\arrow["\id", bend right=10, from=2-1, to=3-4]
	\arrow["\sigma"', dashed, from=2-3, to=3-4]
\end{tikzcd}\]

    If $\tau i_m=\varphi \mu_{m+1}$, then by the universal property we get a map $\sigma$ such that $\sigma j_m=\id$.
    
    Conversely, assume that we have a map $\sigma$ such that $\sigma j_m=\id$. This gives a map $\tau=\sigma\gamma$. Then $\tau i_m=\sigma\gamma i_m=\sigma j_m\varphi\mu_{m+1}.$
    Therefore we can redefine $\secat{\varphi}{\psi}$ to be the minimum $m$ such that $j_m$ admits a retract.
\end{remark}
Following \cite{Carrasquel2015}, we introduce several auxiliary notions that will be useful for establishing subsequent inequalities.
\begin{definition}
    Given a pair of CDGA maps $\c K\xla{\varphi}\c B\xra{\psi}\c E$ where $\psi$ is surjective. Then we define \begin{enumerate}
        \item 
    $\mathrm{m}\secat{\varphi}{\psi}=\min\{m\mid j_m \text{ admits a }\c B\text{-module retract}\}$.
    \item H$\secat{\varphi}{\psi}=\min\{m\mid j_m^\ast \text{ is injective}\}$.  \end{enumerate}
\end{definition}
Now, let us observe the relation between $\secat{\varphi}{\psi},$ $\mathrm{m}\secat{\varphi}{\psi}$, $\mathrm{H}\secat{\varphi}{\psi},$ and $\nil({\varphi}^\ast(\ker{\psi}^\ast)).$
\begin{prop} \label{prop: compare nilker vs secat}
For a pair of CDGA maps $\varphi$ and $\psi$ with $\psi$ surjective, there is a chain of inequalities:
$$\nil({\varphi}^\ast(\ker{\psi}^\ast))\leq \H\secat{\varphi}{\psi}\leq\mathrm{m}\secat{\varphi}{\psi}\leq \secat{\varphi}{\psi}.$$
\end{prop}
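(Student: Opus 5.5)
The two right-hand inequalities are formal consequences of the Remark, and the real work is the leftmost one. For $\mathrm{m}\secat{\varphi}{\psi}\leq\secat{\varphi}{\psi}$: if $\secat{\varphi}{\psi}=m$, the Remark gives a CDGA retraction $\sigma$ of $j_m\colon \c K\to \c K\otimes_{\c B^{\otimes m+1}}(\c B^{\otimes m+1}\otimes\Lambda W_m)=\c K\otimes\Lambda W_m$. The module structures are all induced via $\varphi\mu_{m+1}$, so $\sigma$ is in particular a $\c B$-module retraction. For $\H\secat{\varphi}{\psi}\leq \mathrm{m}\secat{\varphi}{\psi}$: a $\c B$-module retraction that is a chain map gives $\sigma^\ast j_m^\ast=\id$, so $j_m^\ast$ is injective. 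I read "module retract" as a dg-module retract, i.e.\ one commuting with differentials, as in \cite{Carrasquel2015}.

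For $\nil(\varphi^\ast(\ker\psi^\ast))\leq \H\secat{\varphi}{\psi}$, let $m=\H\secat{\varphi}{\psi}$ and take classes $[x_0],\dots,[x_m]\in\ker\psi^\ast$. I need to show $\varphi^\ast([x_0]\cdots[x_m])=0$. Since $j_m^\ast$ is injective, it suffices to show $j_m^\ast\varphi^\ast([x_0]\cdots[x_m])=0$ in $H(\c K\otimes\Lambda W_m)$. By commutativity of the pushout square, $j_m\varphi\mu_{m+1}=\gamma i_m$. Moreover $[x_0]\cdots[x_m]=\mu_{m+1}^\ast([x_0]\otimes\cdots\otimes[x_m])$. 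Hence
\[
j_m^\ast\varphi^\ast([x_0]\cdots[x_m])=\gamma^\ast i_m^\ast([x_0\otimes\cdots\otimes x_m]),
\]
so it is enough to show $i_m^\ast[x_0\otimes\cdots\otimes x_m]=0$. Because $i_m$ is a relative Sullivan model of $q_m$ and the comparison map is a quasi-isomorphism, this reduces to showing $q_m^\ast[x_0\otimes\cdots\otimes x_m]=0$ in $H(\c B^{\otimes m+1}/\c N^{\otimes m+1})$.

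The main obstacle is this last vanishing, since the $x_k$ are cocycles with $\psi(x_k)$ exact rather than elements of $\c N$. I will correct each representative using surjectivity of $\psi$. Write $\psi(x_k)=d e_k$ and choose $b_k\in\c B$ with $\psi(b_k)=e_k$. Then $x_k'=x_k-db_k$ is cohomologous to $x_k$ and satisfies $\psi(x_k')=0$, so $x_k'\in\c N$ and $x_k'$ is a cocycle. It follows that
\[
[x_0\otimes\cdots\otimes x_m]=[x'_0\otimes\cdots\otimes x'_m],
\]
and the latter has representative $x'_0\otimes\cdots\otimes x'_m\in\c N^{\otimes m+1}$, which maps to $0$ under $q_m$. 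Therefore $q_m^\ast$ kills the class, and the chain of inequalities follows. I will also check that the product of the classes $\varphi^\ast[x_k]$ equals $\varphi^\ast$ of their product, which holds because $\varphi^\ast$ is an algebra map; and since every element of the ideal $\varphi^\ast(\ker\psi^\ast)$ is a sum of multiples of such classes, vanishing of all $(m+1)$-fold products of generators gives $(\varphi^\ast(\ker\psi^\ast))^{m+1}=0$.
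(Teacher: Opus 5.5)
Your proof is correct and follows the paper's argument essentially step for step. The two right-hand inequalities come from a CDGA retraction being a $\mathcal{B}$-module chain retraction and then passing to cohomology; the left one comes from correcting representatives to $x_k-db_k\in\mathcal{N}$, using $j_m\varphi\mu_{m+1}=\gamma i_m$ and injectivity of $j_m^\ast$. You also spell out the step the paper leaves implicit, namely that $i_m^\ast[z]=0$ because $q_m(z)=0$ and $q_m$ factors as $i_m$ followed by a quasi-isomorphism.
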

\begin{proof}
    The third inequality is immediate because if a CDGA map $\sigma$ exists, it is also trivially a $\mathcal{B}$-module map.
    
    For the second inequality, if $\sigma j_m=\id$ holds for a $\c B$-module map $\sigma$, then passing to cohomology yields $\sigma^*j_m^*=\id$. Hence, $j_m^*$ is injective.
    
    For the first inequality, let us assume $\H\secat{\varphi}{\psi}=m.$ So, $j_m^\ast$ is injective. We want to show that for any $(m+1)$ cohomological classes $[x_1],\dots, [x_{m+1}]\in \ker\psi^*$, we have $\varphi^*([x_1]\dotsi[x_{m+1}])=0.$
    
    Since $\psi^*([x_k])=0$ for all $k=1,\dots,m+1,$ we have $\psi(x_k)=de_k$ for some $e_k\in\c E$. Since $\psi$ is surjective, $e_k=\psi (b_k)$ for some $b_k\in \c B.$ Therefore, $x_k-db_k\in \ker\psi=\c N.$

Now, let $z=(x_1-db_1)\otimes\dotsi\otimes(x_{m+1}-db_{m+1})\in \c N^{\otimes m+1}$.
We have: \begin{align*}
    j_m^*\varphi^*([x_1]\dotsi[x_{m+1}])&=j_m^*\varphi^*([x_1-db_1]\dotsi[x_{m+1}-db_{m+1}])\\ &=j_m^*\varphi^*\mu_{m+1}^*[z]\\ &=\gamma^*i_m^*[z]=\gamma^*(0)=0.
\end{align*}
Since $j_m^*$ is injective, we have $\varphi^*([x_1]\dotsi[x_{m+1}])=0.$
\end{proof}

 Let $\c K\xla{\varphi} \c B\xra{\psi} \c E$ be two CDGA maps where $\psi$ is surjective. Let $\c N=\ker \psi$. Then we have the following commutative diagram:
\[\begin{tikzcd}
	{\mathcal B} & {\mathcal B/\mathcal N^{{m+1}}} \\
	& {\mathcal B\otimes \Lambda Z_m} \\
	{\mathcal K} & {\mathcal K \otimes \Lambda Z_m}
	\arrow["{\underline q_m}", two heads, from=1-1, to=1-2]
	\arrow["{\underline i_m}"', hook, from=1-1, to=2-2]
	\arrow["\varphi"', from=1-1, to=3-1]
	\arrow["\sim"', two heads, from=2-2, to=1-2]
	\arrow["{\underline \nu}", from=2-2, to=3-2]
	\arrow["{\underline j_m}"', from=3-1, to=3-2]
\end{tikzcd}\]
where $\underline q_m$ is the quotient map, $\underline{i}_m$ denotes the relative Sullivan model for $\u q_m$. The lower square is obtained by taking the pushout of $\varphi$ along $\underline{i}_m.$

We now introduce some additional natural notions which will be used to establish some relations. 
 \begin{definition}
 \begin{enumerate}
     \item 
     $\sc{\varphi}{\psi}$ is the minimum $m$ such that there is a CDGA map $\underline{\sigma}:\c K\otimes \Lambda Z_m\to \c K$ satisfying $\underline{\sigma}\circ\underline{j}_m=\id$.
     \item 
     $\msc{\varphi}{\psi}$ is the minimum $m$ such that there is a $\c B$-module map $\underline{\sigma}:\c K\otimes \Lambda Z_m\to \c K$ satisfying $\underline{\sigma}\circ\underline{j}_m=\id$.
     \item 
     $\hsc{\varphi}{\psi}$ is the minimum $m$ such that $\underline{j}_m^\ast$ is injective.
    \end{enumerate}
 \end{definition}

 It is worth noting that these new notions serve as upper bound for different versions of algebraic sectional categories.
\begin{prop}\label{prop: compare secat vs sc}
For a pair of CDGA maps $\varphi$ and $\psi$ with $\psi$ surjective, we have the following inequalities:
\begin{enumerate}
    \item  $\secat{\varphi}{\psi}\leq\sc{\varphi}{\psi},$
    \item  $\mathrm{m}\secat{\varphi}{\psi}\leq\msc{\varphi}{\psi},$ 
    \item  $\mathrm{H}\secat{\varphi}{\psi}\leq\hsc{\varphi}{\psi}$.
\end{enumerate}
\end{prop}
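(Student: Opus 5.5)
The plan is to construct a single CDGA morphism $\theta$ that compares the two constructions, and then transport each kind of retraction along it. The multiplication map $\mu_{m+1}\colon \mathcal B^{\otimes (m+1)}\to \mathcal B$ sends $\mathcal N^{\otimes(m+1)}$ into $\mathcal N^{m+1}$. So it induces a map $\bar\mu\colon \mathcal B^{\otimes (m+1)}/\mathcal N^{\otimes(m+1)}\to \mathcal B/\mathcal N^{m+1}$ satisfying $\bar\mu\, q_m=\u q_m\,\mu_{m+1}$.

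Consider the square with $\u i_m\circ\mu_{m+1}$ on top, $i_m$ on the left, and the surjective quasi-isomorphism $\mathcal B\otimes\Lambda Z_m\to \mathcal B/\mathcal N^{m+1}$ on the right. We lift $\bar\mu\circ(\mathcal B^{\otimes(m+1)}\otimes\Lambda W_m\xra{\sim}\mathcal B^{\otimes(m+1)}/\mathcal N^{\otimes(m+1)})$ through this surjective quasi-isomorphism, using the lifting property of relative Sullivan algebras (\cite[Proposition~14.6]{FelixHalperinThomas2001}, or \cref{prop:lifting prop for relative sullivan}). This gives a CDGA map
\[
\theta\colon \mathcal B^{\otimes(m+1)}\otimes\Lambda W_m\to \mathcal B\otimes\Lambda Z_m \quad\text{with}\quad \theta\, i_m=\u i_m\,\mu_{m+1}
\]
holding strictly. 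Strict commutativity is possible because the lifting lemma against a surjective quasi-isomorphism gives equality on the base.

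Next, push out along $\varphi$. Recall $\mathcal K\otimes\Lambda W_m=\mathcal K\otimes_{\mathcal B^{\otimes(m+1)}}(\mathcal B^{\otimes(m+1)}\otimes\Lambda W_m)$, taken via $\varphi\mu_{m+1}$, and $\mathcal K\otimes\Lambda Z_m=\mathcal K\otimes_{\mathcal B}(\mathcal B\otimes\Lambda Z_m)$. By the universal property of the first pushout, the maps $\u j_m\colon \mathcal K\to\mathcal K\otimes\Lambda Z_m$ and $\u\nu\circ\theta$ assemble into a CDGA map
\[
\Theta\colon \mathcal K\otimes\Lambda W_m\to \mathcal K\otimes\Lambda Z_m,
\]
compatibility being exactly $\u\nu\,\theta\, i_m=\u\nu\,\u i_m\,\mu_{m+1}=\u j_m\varphi\mu_{m+1}$. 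We get $\Theta\, j_m=\u j_m$. This map is also $\mathcal B$-linear for the $\mathcal B$-module structures induced through $\varphi$, which will be needed in part (2).

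The three inequalities then follow formally. For (1), if $\u\sigma\,\u j_m=\id$ with $\u\sigma$ a CDGA map, then $\sigma=\u\sigma\,\Theta$ is a CDGA map with $\sigma j_m=\id$. By the Remark following \cref{thm: secat top vs algebra}, this gives $\secat{\varphi}{\psi}\le m$. For (2), the same composite works when $\u\sigma$ is only a $\mathcal B$-module map, since $\Theta$ is $\mathcal B$-linear. For (3), $\Theta^\ast j_m^\ast=\u j_m^\ast$, so injectivity of $\u j_m^\ast$ forces injectivity of $j_m^\ast$.

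The main obstacle is obtaining $\theta$ with \emph{strict} commutativity $\theta i_m=\u i_m\mu_{m+1}$ rather than merely up to homotopy; only strict commutativity makes the pushout map $\Theta$ well defined. The fix is to lift against the surjective quasi-isomorphism $\mathcal B\otimes\Lambda Z_m\to\mathcal B/\mathcal N^{m+1}$, where the relative lifting lemma fixes the map on the base exactly. I would also double-check the module-structure conventions in (2), namely that $\mathcal B$ acts on both pushouts through $\varphi$, or equivalently through $\mu_{m+1}$ composed with the diagonal $\mathcal B\to\mathcal B^{\otimes(m+1)}$, so that the actions match.
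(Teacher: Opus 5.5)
Your proposal is correct and follows the paper's proof essentially step for step. Your $\theta$ is the paper's lift $\alpha$, obtained from the relative Sullivan lifting lemma against the surjective quasi-isomorphism $\mathcal B\otimes\Lambda Z_m\to\mathcal B/\mathcal N^{m+1}$, and your $\Theta$ is the paper's pushout-induced map $\rho$ with $\rho\, j_m=\underline j_m$; both proofs then compose retractions with this map and pass to cohomology, and your check that $\Theta$ is $\mathcal K$-linear (hence $\mathcal B$-linear) supplies the detail the paper leaves as ``similarly'' for part (2).
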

\begin{proof}
     We have the following diagram of commutative squares,
\[\begin{tikzcd}
	{\mathcal B^{\otimes m+1}} & {\mathcal B} & {\mathcal K} & {\mathcal K} \\
	& {\mathcal B \otimes \Lambda Z_m} & {\mathcal K \otimes \Lambda Z_m} \\
	{\mathcal B^{\otimes m+1} \otimes \Lambda W_m} &&& {\mathcal K \otimes \Lambda W_m}
	\arrow["{ \mu_{m+1}}", from=1-1, to=1-2]
	\arrow["{i_m}"', hook, from=1-1, to=3-1]
	\arrow["\varphi", from=1-2, to=1-3]
	\arrow["{\underline i_m}", hook, from=1-2, to=2-2]
	\arrow["\id", from=1-3, to=1-4]
	\arrow["{\underline j_m}", hook, from=1-3, to=2-3]
	\arrow["{j_m}", hook, from=1-4, to=3-4]
	\arrow["{\underline \nu}"', from=2-2, to=2-3]
	\arrow[from=3-1, to=3-4]
\end{tikzcd}\]
where the inner and outer squares are the pushout squares.
    
    Let $\underline{\mu}_{m+1}: \mathcal B^{\otimes m+1}/\mathcal N^{\otimes m+1}  \ra \mathcal B/\mathcal N^{m+1}$ be induced by the multiplication map $\mu_{m+1}:\c B^{\otimes{m+1}}\to \c B.$ Then we get $\underline{\mu}_{m+1}\circ q_m=\underline{q}_m\circ \mu_{m+1}.$ 
    Using \cite[Proposition~14.6]{FelixHalperinThomas2001}, we have a lift $\alpha$
\[\begin{tikzcd}
	{\mathcal B^{\otimes m+1}} & {\mathcal B} & {\mathcal B \otimes \Lambda Z_m} \\
	{\mathcal B^{\otimes m+1} \otimes \Lambda W_m} & {\mathcal B^{\otimes m+1}/\mathcal N^{\otimes m+1} } & {\mathcal B/\mathcal N^{m+1}}
	\arrow["{ \mu_{m+1}}", from=1-1, to=1-2]
	\arrow["{i_m}"', hook, from=1-1, to=2-1]
	\arrow["{\underline i_m}", from=1-2, to=1-3]
	\arrow["\sim", two heads, from=1-3, to=2-3]
	\arrow["\alpha"', dashed, from=2-1, to=1-3]
	\arrow["\sim"', from=2-1, to=2-2]
	\arrow["{\underline \mu_{m+1}}"', from=2-2, to=2-3]
\end{tikzcd}\]
 which makes the appropriate triangles commute.
    In particular, we get $\alpha\circ i_m=\underline{i}_m\circ \mu_{m+1}$. Since $$\underline{\nu}\circ\alpha\circ  i_m=\underline{\nu}\circ\underline{i}_m\circ\mu_{m+1}=\underline{j}_m\circ\varphi\circ\mu_{m+1},$$
    by the universal property of the pushout square,
\[
\begin{tikzcd}
	{\mathcal B^{\otimes m+1}} & {\mathcal K} \\
	{\mathcal B^{\otimes m+1} \otimes \Lambda W_m} & {\mathcal K\otimes \Lambda W_m} \\
	&& {\mathcal K\otimes \Lambda Z_m}
	\arrow["{\varphi \mu_{m+1}}", from=1-1, to=1-2]
	\arrow["{i_m}"', hook, from=1-1, to=2-1]
	\arrow["{j_m}", hook, from=1-2, to=2-2]
	\arrow["{\underline j_m}", bend left=25, from=1-2, to=3-3]
	\arrow[from=2-1, to=2-2]
	\arrow["{\underline \nu \alpha}"', bend right=15, from=2-1, to=3-3]
	\arrow["\rho"', dashed, from=2-2, to=3-3]
\end{tikzcd}
\]
we have a map $\rho$ that makes the triangles commute.

Let $\sc{\varphi}{\psi}=m.$
    So, there exists a CDGA map $\underline{\sigma}$ such that $\underline{\sigma}\circ \underline{j}_m=\id.$
    It yields $$\underline{\sigma}\circ\rho\circ j_m=\underline{\sigma}\circ\underline{j}_m=\id.$$
    Therefore, $\secat{\varphi}{\psi}\leq m.$
  
    Similarly, we can prove the second inequality.
  For the third inequality, we note that $\rho\circ j_m=\underline{j}_m$. Passing to cohomology yields, $\rho^\ast\circ j_m^\ast=\underline{j}_m^\ast.$ Therefore, injectivity of $\underline{j}_m^\ast$ implies that $j^*_m$ is also injective. 
\end{proof}

 We now compare these algebraic notions.
\begin{prop} \label{prop: compare Hsc msc sc nil}
For a pair of CDGA maps $\varphi$ and $\psi$ with $\psi$ surjective, there is a chain of inequalities:
$$ \hsc{\varphi}{\psi}\leq \msc{\varphi}{\psi}\leq \sc{\varphi}{\psi}\leq \nil{(\varphi(\ker\psi))}.$$
\end{prop}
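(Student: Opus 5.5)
The first two inequalities are formal and I would prove them exactly as in \cref{prop: compare nilker vs secat}. If $\underline{\sigma}$ is a CDGA map with $\underline{\sigma}\circ\underline{j}_m=\id$, it is in particular a $\c B$-module map, since $\c K\otimes\Lambda Z_m$ and $\c K$ are $\c B$-modules via $\underline{j}_m\varphi$ and $\varphi$. This gives $\msc{\varphi}{\psi}\leq\sc{\varphi}{\psi}$. Next, if a $\c B$-module retraction $\underline{\sigma}$ exists, it is a chain map. Passing to cohomology gives $\underline{\sigma}^\ast\underline{j}_m^\ast=\id$, so $\underline{j}_m^\ast$ is injective and $\hsc{\varphi}{\psi}\leq\msc{\varphi}{\psi}$.

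The substantive inequality is $\sc{\varphi}{\psi}\leq \nil(\varphi(\ker\psi))$. Let $m=\nil(\varphi(\c N))$, so that $\varphi(\c N^{m+1})=\varphi(\c N)^{m+1}=0$; we may assume $m<\infty$. Then $\varphi$ vanishes on the ideal $\c N^{m+1}$, which is a differential ideal because $\c N=\ker\psi$ is one. Hence $\varphi$ factors as $\varphi=\bar\varphi\circ\underline{q}_m$ for a CDGA map $\bar\varphi\colon\c B/\c N^{m+1}\to\c K$. Write $\theta\colon\c B\otimes\Lambda Z_m\xra{\sim}\c B/\c N^{m+1}$ for the quasi-isomorphism in the factorization $\underline{q}_m=\theta\circ\underline{i}_m$. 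Then $\bar\varphi\theta\colon\c B\otimes\Lambda Z_m\to\c K$ satisfies
\[
\bar\varphi\theta\circ\underline{i}_m=\bar\varphi\underline{q}_m=\varphi.
\]

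Next I would use the universal property of the pushout square that defines $\underline{j}_m$ and $\underline{\nu}$. Apply it to the pair of maps $\id\colon\c K\to\c K$ and $\bar\varphi\theta\colon\c B\otimes\Lambda Z_m\to\c K$. These are compatible, since $\id\circ\varphi=\bar\varphi\theta\circ\underline{i}_m$. The pushout therefore yields a CDGA map $\underline{\sigma}\colon\c K\otimes\Lambda Z_m\to\c K$ with $\underline{\sigma}\circ\underline{j}_m=\id$ and $\underline{\sigma}\circ\underline{\nu}=\bar\varphi\theta$. By definition this gives $\sc{\varphi}{\psi}\leq m$.

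I expect no serious obstacle. The only points needing care are that $\c N^{m+1}$ is a differential ideal, so that the quotient is a CDGA and $\bar\varphi$ is a chain map, and that the retraction is obtained by strict commutativity of the pushout rather than up to homotopy. Both points hold because $\underline{q}_m$ factors strictly through $\underline{i}_m$ by the chosen relative Sullivan model.
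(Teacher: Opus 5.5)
Your proposal is correct and follows the paper's proof. For the main inequality, the paper also obtains the retraction from the universal property of the pushout defining $\underline{j}_m$, using the composite $\c B\otimes\Lambda Z_m\xrightarrow{\sim}\c B/\c N^{m+1}\to\c K$. The paper writes that composite as landing in $\c K/\varphi(\c N)^{m+1}$ before invoking $\varphi(\c N^{m+1})=0$. Your explicit factorization $\varphi=\bar\varphi\circ\underline{q}_m$ is a slightly cleaner way to say the same thing, since it avoids quotienting $\c K$ by a set that need not be an ideal. The paper dismisses the first two inequalities as easy, and your arguments for them are the intended ones.
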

\begin{proof}
    Let us prove the last inequality. Assume  $\nil{(\varphi(\ker\psi))}=m$. Then $\varphi(\c N)^{m+1}=0$, which implies $\varphi(\c N^{m+1})=0.$
    \par
    Then we have the commutative diagram obtained by taking pushouts of $\varphi$ along $\underline{i}_m$ and then along $\underline{q}_m:\mathcal B\ra \mathcal B/\mathcal N^{m+1}$.
\[\begin{tikzcd}
	{\mathcal B} & {\mathcal B \otimes \Lambda Z_m} & {\mathcal B/\mathcal N^{{m+1}}} \\
	{\mathcal K} & {\mathcal K \otimes \Lambda Z_m} \\
	{\mathcal K} & {} & {\mathcal K/\varphi(\mathcal N)^{{m+1}}}
	\arrow["{\underline i_m}", from=1-1, to=1-2]
	\arrow["\varphi"', from=1-1, to=2-1]
	\arrow["\sim", from=1-2, to=1-3]
	\arrow[from=1-3, to=3-3]
	\arrow["{\underline j_m}"', hook, from=2-1, to=2-2]
	\arrow["\id"', from=2-1, to=3-1]
	\arrow["\sim" ', two heads, from=1-2, to=2-2]
	\arrow["{\underline \lambda}", dashed, from=2-2, to=3-3]
	\arrow[ from=3-1, to=3-3].
\end{tikzcd}\]
    By the universal property of the inner pushout square, we obtain a CDGA map $$\underline \lambda:\c K\otimes\Lambda Z_m\to \frac{\c K}{\varphi (\c N^{m+1})}$$ such that $\underline \lambda\circ\underline{j}_m=\id.$
    Now $\varphi(\c N^{m+1})=0,$ so $\underline \lambda:\c K\otimes \Lambda Z_m\to\c K$ satisfying $\underline \lambda \circ \underline{j}_m = \id$ $\underline \lambda \circ \underline{j}_m=\id$.
    Therefore, $\sc{\varphi}{\psi}\leq m.$
    
    The other inequalities follow easily.
\end{proof}
Finally, we arrive at the main result. In \cref{ex: contrast to rational version}, we observed a general topological setting where the nilpotency lower bound is strictly less than the relative sectional category (i.e., $\nil\bigl(f^\ast(\ker p^\ast)\bigr) < \secat{f}{p}$). In 
contrast to this strict inequality, \Cref{thm: secat for formal cdga map} below shows that for formal CDGA maps, the algebraic relative sectional category coincides with its nilpotency lower bound. Consequently, the rational topological complexity of formal maps agrees with its cohomological lower bound (generalizing \cite[Theorem~24]{Carrasquel2015}, as we show in \cref{prop: Tc for formal map}).

\begin{theorem}\label{thm: secat for formal cdga map}
    Let $\varphi$ and $\psi$ be formal CDGA maps such that $\psi$
     and $\psi^\ast$ are surjective. Then $$\secat{\varphi}{\psi}= \nil({\varphi^\ast}(\ker{\psi^\ast})).$$
\end{theorem}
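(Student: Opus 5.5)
The plan is to squeeze $\secat{\varphi}{\psi}$ between two copies of $\nil(\varphi^\ast(\ker\psi^\ast))$, using \cref{prop: compare nilker vs secat} for the lower bound and \cref{prop: compare secat vs sc} together with \cref{prop: compare Hsc msc sc nil} for the upper bound. The upper bound requires first replacing $\varphi$ and $\psi$ by their cohomology maps.

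By \cref{prop: compare nilker vs secat} we already have $\nil(\varphi^\ast(\ker\psi^\ast))\leq\secat{\varphi}{\psi}$ for any pair with $\psi$ surjective. So only the reverse inequality needs proof.

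\emph{Step 1: invariance.} I will show that $\secat{\varphi}{\psi}$ depends only on the weak equivalence class of the pair $(\varphi,\psi)$ over the common source $\mathcal B$, with $\psi$ surjective. The cleanest route is \cref{thm: secat top vs algebra}. Realize $\varphi$ and $\psi$ spatially as maps $f_0\colon K_0\to B_0$ and $p_0\colon E_0\to B_0$ into a common target. Then $\secat{\varphi}{\psi}=\secat{f_0}{p_0}$. The right-hand side depends only on the homotopy classes of $f_0$ and $p_0$, so any other pair of models, in particular any surjective model of $p_0$ together with any model of $f_0$, gives the same number.

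\emph{Step 2: substitution.} Formality of $\varphi$ and $\psi$ means they are weakly equivalent to $\varphi^\ast\colon H^\ast(\mathcal B)\to H^\ast(\mathcal K)$ and $\psi^\ast\colon H^\ast(\mathcal B)\to H^\ast(\mathcal E)$. The hypothesis that $\psi^\ast$ is surjective lets $\psi^\ast$ serve as a surjective model of $p_0$, and $\varphi^\ast$ is a model of $f_0$. Hence
\[
\secat{\varphi}{\psi}=\secat{\varphi^\ast}{\psi^\ast}.
\]

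The main obstacle is to ensure that both formal zig-zags can be chosen compatibly on the common source $H^\ast(\mathcal B)$. Separate formality of $\varphi$ and of $\psi$ gives two possibly different identifications of $\mathcal B$ with $H^\ast(\mathcal B)$. I would handle this by reading the hypothesis as joint formality of the diagram $\mathcal K\leftarrow\mathcal B\rightarrow\mathcal E$, which is what is needed for \cref{thm: secat top vs algebra}. Failing that, I would argue through the minimal model $\Lambda V$ of $\mathcal B$, using the lifting in \cref{prop:lifting prop for relative sullivan} to adjust one zig-zag up to homotopy. This is harmless because $\secat{}{}$ only sees homotopy classes of maps.

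\emph{Step 3: upper bound.} Apply \cref{prop: compare secat vs sc}(1) and \cref{prop: compare Hsc msc sc nil} to the pair $(\varphi^\ast,\psi^\ast)$, whose differentials are zero:
\[
\secat{\varphi^\ast}{\psi^\ast}\leq\sc{\varphi^\ast}{\psi^\ast}\leq\nil\bigl(\varphi^\ast(\ker\psi^\ast)\bigr).
\]
Here the kernel of $\psi^\ast$ as a CDGA map is literally the ideal $\ker\psi^\ast\subset H^\ast(\mathcal B)$. Combining this with the lower bound gives the claimed equality.
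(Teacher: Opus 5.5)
Your proposal is correct and is essentially the paper's proof. Formality is used to pass to $(\varphi^\ast,\psi^\ast)$, the chain $\secat{\varphi^\ast}{\psi^\ast}\leq\sc{\varphi^\ast}{\psi^\ast}\leq\nil(\varphi^\ast(\ker\psi^\ast))$ gives the upper bound, and \cref{prop: compare nilker vs secat} gives the lower bound (applying it directly to $(\varphi,\psi)$, as you do, is equally valid). Your Step~2 worry is well founded, and the paper's one-line appeal to formality passes over it: joint formality of $\mathcal K\leftarrow\mathcal B\rightarrow\mathcal E$ is genuinely needed, so your fallback of adjusting one zig-zag up to homotopy cannot succeed. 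To see this, take $B=S^3\vee S^3\vee S^5$, let $f\colon S^5\to B$ be the summand inclusion, and let $p=h\circ f$, where $h$ is the self-equivalence that is the identity on $S^3\vee S^3$ and is $\iota_5+[\iota_3,\iota_3']$ on $S^5$. Each map is formal on its own and $f^\ast(\ker p^\ast)=0$, yet $\secat{f_0}{p_0}=1$, because $d(\iota_5+[\iota_3,\iota_3'])\neq\iota_5$ in $\pi_5(B)\otimes\QQ$ for every $d$.
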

\begin{proof}
    Since $\varphi$ and $\psi$ are formal, $\secat{\varphi}{\psi}=\secat{\varphi^\ast}{\psi^\ast}.$ By \cref{prop: compare secat vs sc} and \cref{prop: compare Hsc msc sc nil}, we know:
 $$\secat{\varphi^\ast}{\psi^\ast}\leq\sc{\varphi^\ast}{\psi^\ast}\leq \nil({\varphi^\ast}(\ker{\psi^\ast})).$$
Conversely, by \cref{prop: compare nilker vs secat}, we have: $$\nil({\varphi^\ast}(\ker{\psi^\ast})) \leq \secat{\varphi^\ast} {\psi^\ast}.$$
   The equality immediately follows.
\end{proof}

\section{Applications}
We now present several applications of the rational version of the relative sectional category.
It is well known that the rational Lusternik--Schnirelmann category of a space admits
an algebraic description in terms of Sullivan models \cite{FelixHalperinThomas2001}.
Moreover, if $X$ is a formal space, then
\[
\cat(X_0)=\nil(H^{>0}(X;\QQ)),
\]
see \cite{FelixHalperin1982,FelixHalperinThomas2001}. We generalize these results to maps between topological spaces.

Following \cite{LScat_map}, we recall that the \emph{Lusternik-Schnirelmann category} $\cat(f)$ of a map $f \colon X \to Y$ is the least non-negative integer $n \leq \infty$ such that $X$ admits a cover by $n+1$ open sets $\{U_i\}_{i=0}^n$ on which the restriction $f|_{U_i}$ is null-homotopic. It is immediate from the definition that
\[
\cat(f)=\secat{f}{\ast \to Y},
\]
that is, $\cat(f)$ coincides with the relative sectional category of the inclusion of a point.

\begin{prop}\label{prop: rational cat of f}
If $\varphi \colon \mathcal Y \to \mathcal X$ is a CDGA model for a map $f \colon X \to Y$, then
\[
\cat(f_0)=\secat{\varphi}{\epsilon \colon \mathcal Y \to \mathbb Q},
\]
where $\epsilon$ denotes the augmentation.
\end{prop}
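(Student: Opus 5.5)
The plan is to deduce this directly from \cref{thm: secat top vs algebra}, applied with $K=X$, $B=Y$, and $p\colon \ast \to Y$ the inclusion of the base point. Since $\cat(f)=\secat{f}{\ast\to Y}$ for any map, and rationalization commutes with this identification, we have $\cat(f_0)=\secat{f_0}{p_0}$. Here $p_0\colon \ast\to Y_0$ is the inclusion of the base point of the rationalization, because $\ast_0=\ast$. So it suffices to check the hypotheses of \cref{thm: secat top vs algebra}. We need a surjective CDGA model $\psi$ of $p_0$ with source $\mathcal Y$, together with the given model $\varphi\colon\mathcal Y\to\mathcal X$ of $f_0$.

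The key step is to show that the augmentation $\epsilon\colon \mathcal Y\to \QQ$ is a surjective model of $p_0$. Surjectivity is clear, since $\epsilon$ is unital and $\QQ$ is spanned by $1$. For the modelling property, recall that $\mathcal Y$ comes with a zig-zag $\mathcal Y\xleftarrow{\sim}\Lambda V\xrightarrow{\sim}A_{PL}(Y)$ with $\Lambda V$ minimal. We then check that the square
\[
\begin{tikzcd}
	{\mathcal Y} & {\Lambda V} & {A_{PL}(Y)} \\
	{\QQ} & {\QQ} & {A_{PL}(\ast)}
	\arrow["\epsilon"', from=1-1, to=2-1]
	\arrow["\sim"', from=1-2, to=1-1]
	\arrow["\sim", from=1-2, to=1-3]
	\arrow["{\epsilon_V}", from=1-2, to=2-2]
	\arrow["{A_{PL}(p)}", from=1-3, to=2-3]
	\arrow[equal, from=2-2, to=2-1]
	\arrow["\sim", from=2-2, to=2-3]
\end{tikzcd}
\]
commutes up to homotopy. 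Here $\epsilon_V$ is the augmentation of $\Lambda V$ killing $V$.

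There is one subtlety. The augmentation of $\mathcal Y$ is only canonical when $\mathcal Y$ is connected as an algebra, i.e. $\mathcal Y^0=\QQ$. If not, I will either assume it as part of the meaning of ``the augmentation'', or replace $\mathcal Y$ by such a model, which is harmless since $\secat{\varphi}{\psi}$ depends only on weak equivalence classes by \cref{thm: secat top vs algebra} itself. With $\mathcal Y^0=\QQ$ in place, the compositions are handled as follows:
\begin{itemize}
\item Any CDGA map $\Lambda V\to\mathcal Y\to\QQ$ is an augmentation of $\Lambda V$. It must vanish on $V$ for degree reasons, since $\QQ$ is concentrated in degree $0$ and $V=V^{\geq 2}$ by simple connectivity. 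So it equals $\epsilon_V$.
\item On the other side, $\Lambda V\to A_{PL}(Y)\to A_{PL}(\ast)\simeq\QQ$ is likewise forced to kill $V$ up to homotopy. More precisely, $A_{PL}(\ast)$ is quasi-isomorphic to $\QQ$, and homotopy classes of maps from $\Lambda V$ to it correspond to maps into $\QQ$ by \cite[Proposition~14.6]{FelixHalperinThomas2001}. There is exactly one such map.
\end{itemize}

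The main obstacle is therefore merely the bookkeeping of the weak equivalence in the homotopy-commutative sense of the paper's definition. I expect it to be resolved by the uniqueness of maps from a simply connected minimal algebra into $\QQ$. Once this is established, \cref{thm: secat top vs algebra} gives $\cat(f_0)=\secat{f_0}{p_0}=\secat{\varphi}{\epsilon}$.
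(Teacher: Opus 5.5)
Your proposal is correct and follows the paper's proof: rewrite $\cat(f_0)$ as $\secat{f_0}{\ast\to Y_0}$, observe that $\epsilon$ is a surjective model of the base-point inclusion, and invoke \cref{thm: secat top vs algebra}; your square just supplies the modelling detail the paper asserts without argument. That square in fact commutes strictly, since $A_{PL}(\ast)=\QQ$ and any CDGA map $\Lambda V\to\QQ$ kills $V$ for degree reasons, so your caveat about $\mathcal Y^0=\QQ$ is unnecessary: every CDGA map $\mathcal Y\to\QQ$ models $\ast\to Y_0$, compatibly with whatever zig-zag is used for $\varphi$.
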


\begin{proof}
By definition, $\cat(f_0)=\secat{f_0}{\ast \to Y_0}$. The inclusion $\ast \to Y_0$ is modeled by the augmentation $\epsilon \colon \mathcal Y \to \mathbb Q$. The result now follows from \cref{thm: secat top vs algebra}.
\end{proof}

\begin{prop}
    For a formal map $f:X\ra Y$, the LS-category of $f_0$ is given by $$\cat(f_0)=\nil (f^\ast(H^\ast(Y))^+),$$ where the notation $A^+$ denotes the ideal of positive degree elements in the CDGA $A$.
\end{prop}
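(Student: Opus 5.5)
The plan is to combine \cref{prop: rational cat of f} with \cref{thm: secat for formal cdga map}, applied to the formal model $\varphi = f^\ast \colon H^\ast(Y) \to H^\ast(X)$ and the augmentation $\epsilon \colon H^\ast(Y) \to \QQ$. Since $f$ is formal, $f^\ast$ is a CDGA model for $f$ (with zero differentials). By \cref{prop: rational cat of f}, this gives $\cat(f_0) = \secat{f^\ast}{\epsilon}$, where $\epsilon \colon H^\ast(Y)\to \QQ$ is the augmentation. The result \cref{prop: rational cat of f} allows any model of $f$, so we are free to use this one. We must check that $\epsilon$ really models $\ast \to Y_0$. Since $Y$ is simply connected, $H^0(Y)=\QQ$, so the augmentation is well defined. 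It is surjective, which is what \cref{thm: secat top vs algebra} requires.

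Next we verify the hypotheses of \cref{thm: secat for formal cdga map} for the pair $\varphi = f^\ast$, $\psi = \epsilon$. The map $\varphi=f^\ast$ is a map between CDGAs with zero differential, so it equals its own cohomology map and is therefore formal. The same holds for $\epsilon$, which is formal, surjective, and satisfies $\epsilon^\ast=\epsilon$, so $\epsilon^\ast$ is surjective too. Alternatively, one can bypass the theorem's formality language and argue directly: \cref{prop: compare secat vs sc}, \cref{prop: compare Hsc msc sc nil} and \cref{prop: compare nilker vs secat} applied to $(f^\ast,\epsilon)$ give
\[
\nil\bigl(f^\ast(\ker \epsilon)\bigr) \le \secat{f^\ast}{\epsilon} \le \nil\bigl(f^\ast(\ker\epsilon)\bigr).
\]
The upper bound uses that $\varphi(\ker\psi)$ at the cochain level coincides with $\varphi^\ast(\ker\psi^\ast)$, because the differentials vanish.

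Finally, we identify the ideal. We have $\ker\epsilon = H^{+}(Y)$, the positive-degree part. Since $f^\ast$ preserves degree and $f^\ast(1)=1$, the image $f^\ast(H^+(Y))$ is exactly $f^\ast(H^\ast(Y))^+$, the positive-degree part of the subalgebra $f^\ast(H^\ast(Y))\subseteq H^\ast(X)$.

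One subtlety is that nilpotency should be measured as powers inside $H^\ast(X)$. Products of elements of $f^\ast(H^+(Y))$ stay in this image, because the image is a subalgebra. Therefore the $(m+1)$-fold product set is the same whether we regard the image as an ideal of the subalgebra or as a subset of $H^\ast(X)$, and the nil values agree. Combining these steps yields $\cat(f_0)=\nil(f^\ast(H^\ast(Y))^+)$.

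The main potential obstacle is the bookkeeping about models: confirming that $\epsilon$ on $H^\ast(Y)$ is a legitimate surjective model of $\ast\to Y_0$ compatible with the model $f^\ast$ of $f_0$. Both must sit over the same model of $Y$, and here they do, since they share the domain $H^\ast(Y)$. This step also requires $Y$ to be formal, which follows from $f$ being formal.
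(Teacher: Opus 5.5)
Your proposal is correct and follows essentially the same route as the paper: model $f_0$ by $f^\ast$ and the point inclusion by the augmentation $\epsilon\colon H^\ast(Y)\to\QQ$, use \cref{prop: rational cat of f} to get $\cat(f_0)=\secat{f^\ast}{\epsilon}$, then apply \cref{thm: secat for formal cdga map} and identify $f^\ast(\ker\epsilon)=f^\ast(H^\ast(Y))^+$. Your extra checks spell out details the paper's short proof leaves implicit: zero differentials make $f^\ast$ and $\epsilon$ trivially formal with $\epsilon^\ast=\epsilon$ surjective, formality of $Y$ follows from that of $f$, and nilpotency is the same whether measured in the image subalgebra or in $H^\ast(X)$.
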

\begin{proof}
Since $f$ is formal, the rationalization $f_0$ is modeled by $f^\ast \colon H^\ast(Y) \to H^\ast(X)$. As $Y$ is formal, the inclusion $\ast \to Y_0$ is modeled by the augmentation $\epsilon \colon H^\ast(Y) \to \mathbb Q$. By \cref{prop: rational cat of f},
\[
\cat(f_0)=\secat{f^\ast}{\epsilon}.
\]
By \cref{thm: secat for formal cdga map}, we conclude $$\cat(f_0)=\nil (f^\ast(\ker \epsilon))=\nil (f^\ast(H^\ast(Y))^+).$$
\end{proof}

The rational topological complexity also admits an algebraic description in terms of
Sullivan models \cite{JessupMurilloParent2012}.
Moreover, if $X$ is a formal space, then \cite{JessupMurilloParent2012} prove
\[
\TC(X_0)=\nil(\ker \mu),
\]
where $\mu:H^\ast(X)\otimes H^\ast(X)\to H^\ast(X)$ is the multiplication map. We generalize these results to maps.
We note that, since $\pi_n$ is a fibration replacement of the diagonal map $\Delta_n \colon Y \to Y^n$, the map $\pi_n$ can be replaced by $\Delta_n$ in \cref{defn: higher tc of map}.
\begin{prop}\label{prop: rational version of tc}
If $\varphi \colon \mathcal Y \to \mathcal X$ is a CDGA model for $f \colon X \to Y$ and $\mu_n \colon \mathcal Y^{\otimes n} \to \mathcal Y$ denotes the $n$-fold multiplication, then
\[
\TC_n(f_0)=\secat{\varphi^{\otimes n}}{\mu_n}.
\]
\end{prop}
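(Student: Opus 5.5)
The plan is to reduce the statement to \cref{thm: secat top vs algebra}, exactly as in the proof of \cref{prop: rational cat of f}.

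First, rewrite $\TC_n(f_0)$ as a relative sectional category of the right pair of maps. By \cref{defn: higher tc of map} applied to $f_0$, we have $\TC_n(f_0)=\secat{f_0^n}{\pi_n}$, where $\pi_n\colon Y_0^I\to Y_0^n$. As noted before the statement, $\pi_n$ is a fibration replacement of $\Delta_n\colon Y_0\to Y_0^n$. Relative sectional category only depends on $p$ through the homotopy commutativity condition $p\circ s_i\simeq f|_{U_i}$, so it is invariant under replacing $p$ by a homotopy equivalent map over $B$. Hence $\TC_n(f_0)=\secat{f_0^n}{\Delta_n}$. We also use that $(f^n)_0=(f_0)^n$, since rationalization commutes with finite products of simply connected finite type spaces. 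So $f_0^n$ is the rationalization of $f^n\colon X^n\to Y^n$.

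Second, identify the algebraic models. Since $\varphi$ models $f$, the tensor power $\varphi^{\otimes n}\colon\c Y^{\otimes n}\to\c X^{\otimes n}$ models $f^n$. This follows from the Künneth quasi-isomorphism $A_{PL}(Y)^{\otimes n}\xra{\sim}A_{PL}(Y^n)$ and its naturality in $f$: tensoring the zig-zag of quasi-isomorphisms witnessing that $\varphi$ is weakly equivalent to $A_{PL}(f)$ gives a zig-zag, since over $\QQ$ a tensor product of quasi-isomorphisms is again a quasi-isomorphism. The multiplication $\mu_n\colon\c Y^{\otimes n}\to\c Y$ models $\Delta_n$, as recalled in the background. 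It is surjective, because $\mu_n(y\otimes1\otimes\cdots\otimes1)=y$.

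Third, apply \cref{thm: secat top vs algebra} with $K=X^n$, $B=Y^n$, $E=Y$, $p=\Delta_n$, $\psi=\mu_n$ and $\varphi^{\otimes n}$ in the role of the model of $f_0^n$. This gives $\TC_n(f_0)=\secat{f_0^n}{(\Delta_n)_0}=\secat{\varphi^{\otimes n}}{\mu_n}$.

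The main obstacle is the compatibility check behind the second step. The hypothesis of \cref{thm: secat top vs algebra} asks that $\psi$ and $\varphi$ model $p_0$ and $f_0$ with respect to the same model $\c B$ of the base. Here $\c B=\c Y^{\otimes n}$ must model $\Delta_n$ and $f^n$ simultaneously. The proof of that theorem only uses a minimal model $\Lambda V\xra{\sim}\c B$ compatible with both maps. To supply this, take a Sullivan model $\Lambda V\to\c Y$. Its $n$-fold tensor power $(\Lambda V)^{\otimes n}$ serves as a common Sullivan model of $\c Y^{\otimes n}$. It is compatible both with the multiplication $\mu_n$ and with $\varphi^{\otimes n}$, so the required homotopy commutative diagrams exist.
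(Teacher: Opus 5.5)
Your proposal is correct and follows the paper's proof: you replace $\pi_n$ by $\Delta_n$, use that $\mu_n$ is a surjective model of $\Delta_n$ and $\varphi^{\otimes n}$ is a model of $f_0^n$, and invoke \cref{thm: secat top vs algebra}. You also make explicit two points the paper leaves implicit: the invariance of $\secat{f}{p}$ under replacing $p$ by a homotopy equivalent map over the base, and the need for a common base model $(\Lambda V)^{\otimes n}$ compatible with both $\mu_n$ and $\varphi^{\otimes n}$.
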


\begin{proof}
By definition, $\TC_n(f_0)=\secat{f_0^{ n}}{\Delta_n}$, where $\Delta_n \colon Y_0 \to (Y_0)^n$ is the rationalized diagonal map. Since $\Delta_n$ is modeled by $\mu_n$, the result follows immediately from \cref{thm: secat top vs algebra}.
\end{proof}

\begin{prop}\label{prop: Tc for formal map}
For a formal map $f:X \ra Y$, the $n$-th topological complexity of the map $f_0$ is given by
    $$\TC_n(f_0)=\nil(({f^\ast})^{\otimes n}(\ker \mu_n)),$$
where $\mu_n: H^\ast (Y)^{\otimes n} \ra H^\ast(Y)$ is the multiplication map.
\end{prop}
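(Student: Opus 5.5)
The plan is to reduce to \cref{thm: secat for formal cdga map} by way of \cref{prop: rational version of tc}.

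\emph{Step 1: a cohomological model.} Since $f$ is formal, $f^\ast \colon H^\ast(Y)\to H^\ast(X)$ is a CDGA model (with zero differentials) for $f$. By \cref{prop: rational version of tc} with $\varphi = f^\ast$, we get
\[
\TC_n(f_0)=\secat{(f^\ast)^{\otimes n}}{\mu_n},
\qquad \mu_n\colon H^\ast(Y)^{\otimes n}\to H^\ast(Y).
\]
Some care is needed here. The diagonal $\Delta_n$ is modeled by multiplication on any model of $Y$, so it is modeled by $\mu_n$ on $H^\ast(Y)$ because $Y$ is formal (formality of $f$ includes formality of the source and target models). Also, $(f^\ast)^{\otimes n}$ models $f_0^{n}$, because the Künneth quasi-isomorphisms are compatible with products of maps in the finite-type simply connected setting.

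\emph{Step 2: checking the hypotheses of \cref{thm: secat for formal cdga map}.}
\begin{itemize}
\item Both maps have zero differentials, so each is its own cohomology map, and each is trivially formal.
\item $\mu_n$ is surjective, since $a\mapsto a\otimes1\otimes\cdots\otimes1$ is a section. Hence $\mu_n^\ast=\mu_n$ is also surjective.
\end{itemize}
The theorem then gives
\[
\TC_n(f_0)=\nil\bigl(((f^\ast)^{\otimes n})^\ast(\ker\mu_n^\ast)\bigr)=\nil\bigl((f^\ast)^{\otimes n}(\ker\mu_n)\bigr).
\]
The last equality holds because taking cohomology changes nothing when the differentials are zero.

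\emph{Main obstacle.} The substantive point is Step 1: justifying that the pair $\bigl((f^\ast)^{\otimes n},\mu_n\bigr)$ really is a model of the pair $(f_0^n,\Delta_n)$ as required in \cref{thm: secat top vs algebra}. That theorem only requires $\mu_n$ to be a surjective model of $\Delta_n$ and $(f^\ast)^{\otimes n}$ to be \emph{any} model of $f_0^n$, so the two models need not be compatible beyond sharing the source $H^\ast(Y)^{\otimes n}$.

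I would argue this as follows. Tensor the zig-zag of quasi-isomorphisms witnessing formality of $f$ with itself $n$ times. Compose with the quasi-isomorphism $A_{PL}(Y)^{\otimes n}\to A_{PL}(Y^n)$ and its analogue for $X$. Naturality of these maps in $f$ then gives a homotopy commutative diagram exhibiting $(f^\ast)^{\otimes n}$ as weakly equivalent to $A_{PL}(f^n)$. The same zig-zag for $Y$ alone shows that $\mu_n$ models $\Delta_n$, as already used in \cref{prop: rational version of tc}.
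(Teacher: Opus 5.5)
Your proposal is correct and takes the paper's route. You apply \cref{prop: rational version of tc} with the cohomological model $f^\ast$, so that $(f^\ast)^{\otimes n}$ models $f_0^n$ and $\mu_n$ on $H^\ast(Y)^{\otimes n}$ models $\Delta_n$, and then conclude with \cref{thm: secat for formal cdga map}. You also fill in two points the paper leaves implicit: you use the same formality zig-zag for $Y$ on both sides so the two models are compatible, and you check that $\mu_n=\mu_n^\ast$ is surjective via the section $a\mapsto a\otimes 1\otimes\cdots\otimes 1$.
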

\begin{proof}
    Since $Y$ is a formal space, the diagonal map $\Delta_n: Y \ra Y^n$ is a formal map. 
    We note that the map $\Delta_n:Y\ra Y^n$ can be modeled by the surjective map $\mu_n: H^\ast (Y)^{\otimes n} \ra H^\ast(Y).$ 
    Moreover, since $f:X \ra Y$ is formal, the product map $f^{n}:X^n\ra Y^n$ is a formal map. So we can choose $(f^\ast)^{\otimes n}$ as a CDGA model for the map $(f_0)^n$.
    By \cref{prop: rational version of tc}, we have $\TC_n(f)=\secat{(f^\ast)^{\otimes n}}{\mu_n}.$
    We conclude the proof by applying \cref{thm: secat for formal cdga map}.
\end{proof}
The following lemma shows that the nilpotency of an ideal is preserved under monomorphisms. This fact will be used repeatedly in our computations.
\begin{lemma}\label{lem: nilpotency preserved by mono}
    Let $g \colon A \to B$ be a monomorphism of graded commutative algebras and let $I \subset A$ be an ideal. Then
\[
\nil(I)=\nil(g(I)).
\]
\end{lemma}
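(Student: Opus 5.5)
The plan is to show that $g$ restricts to an isomorphism of graded algebras (without unit, as needed) from $I$ onto $g(I)$ that is compatible with products. Then the vanishing of $(m+1)$-fold products in one ideal is equivalent to vanishing in the other. Here $g(I)$ is understood as the ideal of $B$ that the paper uses in nilpotency statements, namely the set $g(I)$ together with its products, so $g(I)^{m+1}$ is spanned by products $g(a_1)\cdots g(a_{m+1})$ with $a_k\in I$. First I would fix what $g(I)^{m+1}$ means: it is the span of products of $m+1$ elements of $g(I)$. This agrees with the convention in \cref{prop: compare nilker vs secat} and in \cref{thm: secat for formal cdga map}, where $\nil(\varphi^\ast(\ker\psi^\ast))$ is computed by testing products $\varphi^\ast([x_1]\cdots[x_{m+1}])$.

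The key step is the identity
\[
g(I)^{m+1}=g\bigl(I^{m+1}\bigr),
\]
which follows because $g$ is multiplicative and linear: $g(a_1)\cdots g(a_{m+1})=g(a_1\cdots a_{m+1})$, and linear combinations are carried along. Since $g$ is injective, $g(I^{m+1})=0$ if and only if $I^{m+1}=0$. Hence, for every $m$, $I^{m+1}=0$ if and only if $g(I)^{m+1}=0$. Taking the least such $m$ (with the value $\infty$ when no such $m$ exists) gives $\nil(I)=\nil(g(I))$.

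I expect the only point requiring care to be the interpretation of $g(I)$ when $g(I)$ is not itself an ideal of $B$. If one instead takes the ideal of $B$ generated by $g(I)$, its powers are $B\cdot g(I^{m+1})$. These still vanish exactly when $g(I^{m+1})=0$, because $1\in B$ and $B\cdot 0=0$. So the same equivalence holds and the argument is unchanged. I would add a sentence noting that either reading gives the same nilpotency.
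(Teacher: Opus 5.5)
Your argument is correct and is essentially the paper's: both rest on the identity $g(I)^k=g(I^k)$, which comes from multiplicativity and linearity of $g$, combined with injectivity of $g$. Your phrasing as the equivalence $I^{m+1}=0 \iff g(I)^{m+1}=0$ for every $m$ is tidier than the paper's write-up, which argues via non-vanishing of $m$-th powers and writes the resulting inequalities as $\le$ where $\ge$ is what actually follows; your version also covers the value $\infty$ and the reading of $g(I)$ as the ideal it generates in $B$.
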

\begin{proof}
   Suppose $\nil(g(I))=m$. Then $g(I)^m\neq 0$, which implies $g(I^m)\neq 0$. So, we have $I^m\neq 0$, and hence $\nil(I)\le m$.

  Conversely, if $\nil(I)=m$, then $I^m\neq 0$. Injectivity of $g$ implies $g(I^m)\neq 0$, and thus $g(I)^m\neq 0$. Therefore, $\nil(g(I))\le m$.
\end{proof}
We now explicitly compute the  higher topological complexity of certain maps using \cref{prop: Tc for formal map}. 

\begin{example}\label{ex: Tc of sphere inclusions}
The inclusion maps $f \colon S^k \to S^l$ between spheres are formal. The rational cohomology algebra of $S^l$ is
\[
H^\ast(S^l)=
\begin{cases}
\Lambda(x), & l \text{ odd},\\
\Lambda(x)/(x^2), & l \text{ even},
\end{cases}
\]
where $|x|=l$.

If $k\neq l$, then $f^\ast=0$ for degree reasons, and hence $\TC_n(f_0)=0$. We therefore assume $k=l$.

The kernel of the multiplication map
\[
\mu_n \colon H^\ast(S^l)^{\otimes n} \to H^\ast(S^l)
\]
is generated by the elements
\[
z_i := x_i - x_1, \qquad 2\le i \le n,
\]
where $x_i=1\otimes \cdots \otimes x \otimes \cdots \otimes 1$ denotes $x$ in the $i$-th tensor factor.

If the degree of $f$ is zero, then $(f^\ast)^{\otimes n}(\ker \mu_n)=0$, and thus $\TC_n(f_0)=0$. If the degree of $f$ is non-zero, then $f^\ast$ is an isomorphism.

Assume first that $l$ is odd. Then
\begin{equation*}
z_2 z_3 \cdots z_n
= \sum_{i=1}^n (-1)^{i+1} x_1 x_2 \cdots \widehat{x_i} \cdots x_n \neq 0,  
\end{equation*}
so $(\ker \mu_n)^{n-1}\neq 0$. Moreover, since $x_i x_1 = -x_1 x_i$, we have
\begin{align*}
   z_i^2=(x_i-x_1)^2=0    
\end{align*}
for all $i$, therefore any further repetition of a factor $z_i$ in the product
$z_2 z_3 \cdots z_n$ forces the product to vanish, so we have $(\ker \mu_n)^n=0$. Hence $\nil(\ker \mu_n)=n-1$, and by \cref{lem: nilpotency preserved by mono} and \cref{prop: Tc for formal map},
\[
\TC_n(f_0)=n-1.
\]

Now suppose that $l$ is even. In this case, since $x_2 x_1 = x_1 x_2$, we have
\begin{align*}
   z_2^2=(x_2-x_1)^2=-2x_1x_2     
\end{align*}
so that 
\[
z_2^2 z_3 \cdots z_n=-2x_1x_2(x_3-x_1)\cdots(x_n-x_1)=-2x_1x_2\cdots x_n \neq 0.
\]
Thus $(\ker \mu_n)^n\neq 0$. On the other hand, $(\ker \mu_n)^{n+1}=0$, since elements in this ideal have total degree $(n+1)l$, which exceeds the top nontrivial degree in $H^\ast(S^l)^{\otimes n}$. Therefore $\nil(\ker \mu_n)=n$, and again by \cref{lem: nilpotency preserved by mono} and \cref{prop: Tc for formal map},
\[
\TC_n(f_0)=n.
\]
\end{example}

We refer to \cite{higher_milnor} for similar but more complicated calculations used to study the  higher topological complexity of Milnor manifolds.
\begin{example}
We know that the inclusion maps $f:\CC P^k \to \CC P^l$ between complex projective spaces are formal maps, with $k\le l$.

The rational cohomology algebra of $\CC P^l$ is
\[
H^\ast(\CC P^l)=\Lambda(x)/(x^{l+1}),
\]
where $|x|=2$.
If $k<l$, degree considerations force $f^\ast=0$, and hence $\TC_n(f_0)=0$.

We now consider the case $k=l$. If $\deg(f)=0$, then again $\TC_n(f_0)=0$, so we assume that $\deg(f)\neq 0$.
In this case $f^\ast$ is an isomorphism.

The kernel of the multiplication map
\[
\mu_n:H^\ast(\CC P^l)^{\otimes n}\to H^\ast(\CC P^l)
\]
is generated by
\[
\{w_i:=x_i-x_{i-1}\mid 2\le i\le n\}.
\]

A direct binomial computation gives
\begin{align*}
w_i^{2l}
&=(x_i-x_{i-1})^{2l}\\
&=\sum_{j=0}^{2l}\binom{2l}{j}x_i^j(-x_{i-1})^{2l-j}\\
&=\binom{2l}{l}(-1)^l x_{i-1}^l x_i^l,
\end{align*}
since all other terms vanish for degree reasons, as $x_i^{l+1}=0$.
Set $\gamma=(-1)^l\binom{2l}{l}$.

If $n=2s$ is even, then
\[
P:=w_2^{2l}w_4^{2l}\cdots w_{2s}^{2l}
=\gamma^s x_1^l x_2^l\cdots x_{2s-1}^l x_{2s}^l\neq 0,
\]
so $\nil(\ker\mu_n)\ge 2ls=nl$.

If $n=2s+1$ is odd, then
\[
P\cdot w_{2s+1}^l
=\gamma^s x_1^l x_2^l\cdots x_{2s}^l x_{2s+1}^l\neq 0,
\]
and again $\nil(\ker\mu_n)\ge 2ls+s=nl$.

Since cohomology classes in $(\ker\mu_n)^l$ already attain the top possible degree in $H^\ast(\CC P^l)^{\otimes n}$, we have $(\ker\mu_n)^{l+1}=0$.
Therefore, by \cref{lem: nilpotency preserved by mono} and \cref{prop: Tc for formal map},
\[
\TC_n(f_0)=nl.
\]
\end{example}

In \cite{Murillo_wu}, Murillo and Wu introduced a notion of topological complexity for a map which is equivalent to Scott’s version \cite{Scott}.

\begin{definition}\label{defn: MW tc of map}
Given a continuous map $f:X\ra Y$, the \emph{topological 
complexity } of  $f$, $\TC^{MW}(f)$, is the least non-negative integer $n\leq \infty$ such that $X\times X$  can be covered by $n+1$ open sets $\{U_i\}_{i=0}^n$  on each of which there is a continuous map $s_i:U_i\ra X^I$ satisfying $$(f\times f)\pi s_i\simeq (f\times f)|_{U_i},$$
where $\pi:X^I\ra X\times X$ is  the  path  ﬁbration, $\pi(\gamma)=(\gamma(0),\gamma(1)).$ 
\end{definition}
As observed in \cite{Carrasquel2015}, the Murillo--Wu topological complexity is a particular instance of the relative sectional category. Explicitly, for a map $f \colon X \to Y$,
\[
\TC^{MW}(f)=\secat{f \times f}{(f \times f)\Delta_X}.
\]

We now recover the computation of the topological complexity of a formal map given by Murillo and Wu \cite[Theorem~3.5]{Murillo_wu}, giving a direct computation and without relying on the equivalence of different versions of the topological complexity of maps.

\begin{prop}
If $f:X\ra Y$ is a formal map, then
    \[
\TC^{MW}(f_0)=\nil\bigl(\ker \mu_X \cap \img(f^\ast \otimes f^\ast)\bigr),
\]
    where $\mu_X:H^\ast(X){\otimes }H^\ast(X)\ra H^\ast(X)$ denotes the multiplication map.
\end{prop}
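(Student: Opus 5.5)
The plan is to apply the relative framework of \cref{thm: secat top vs algebra} directly, rather than \cref{thm: secat for formal cdga map}, since the map being sectioned, $(f\times f)\Delta_X$, has cohomology map $\mu_X\circ(f^\ast\otimes f^\ast)$. This map is generally not surjective, even in cohomology.

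\textbf{Step 1: surjective models.} Since $f$ is formal, $f\times f$ is modelled by $\varphi_0=f^\ast\otimes f^\ast\colon H^\ast(Y)^{\otimes 2}\to H^\ast(X)^{\otimes 2}$, and $(f\times f)\Delta_X$ is modelled by $\mu_X\circ(f^\ast\otimes f^\ast)$. To get a surjective model, I enlarge the source to
\[
\mathcal B=H^\ast(Y)^{\otimes 2}\otimes \Lambda(a_j,da_j),
\]
where the $a_j$ run over a basis of $H^+(X)$. Since $\Lambda(a_j,da_j)$ is contractible, the inclusion $H^\ast(Y)^{\otimes 2}\to\mathcal B$ is a quasi-isomorphism. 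Define $\psi\colon\mathcal B\to H^\ast(X)$ to be $\mu_X(f^\ast\otimes f^\ast)$ on $H^\ast(Y)^{\otimes 2}$, with $a_j\mapsto a_j$ and $da_j\mapsto 0$. Then $\psi$ is surjective and models $(f\times f)\Delta_X$. I extend $\varphi_0$ to a CDGA map $\varphi\colon\mathcal B\to H^\ast(X)^{\otimes 2}$, choosing its values on the $a_j$ as discussed in Step 3. By \cref{thm: secat top vs algebra}, $\TC^{MW}(f_0)=\secat{\varphi}{\psi}$.

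\textbf{Step 2: lower bound.} \cref{prop: compare nilker vs secat} does not require $\psi^\ast$ to be surjective, so $\nil(\varphi^\ast(\ker\psi^\ast))\le\secat{\varphi}{\psi}$. In cohomology, $H(\mathcal B)=H^\ast(Y)^{\otimes 2}$, $\psi^\ast=\mu_X(f^\ast\otimes f^\ast)$ and $\varphi^\ast=f^\ast\otimes f^\ast$. Hence
\[
\varphi^\ast(\ker\psi^\ast)=\{(f^\ast\otimes f^\ast)(y)\mid \mu_X(f^\ast\otimes f^\ast)(y)=0\}=\ker\mu_X\cap\img(f^\ast\otimes f^\ast),
\]
which gives the lower bound $\nil\bigl(\ker\mu_X\cap\img(f^\ast\otimes f^\ast)\bigr)\le\TC^{MW}(f_0)$.

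\textbf{Step 3: upper bound (main obstacle).} I use \cref{prop: compare secat vs sc} and \cref{prop: compare Hsc msc sc nil}, which give $\secat{\varphi}{\psi}\le\nil(\varphi(\ker\psi))$ at cochain level. So it suffices to choose the extension $\varphi$ so that $\varphi(\ker\psi)\subseteq\ker\mu_X\cap\img(f^\ast\otimes f^\ast)$.

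The choice $\varphi(a_j)=a_j\otimes 1$ gives $\mu_X\varphi=\psi$, hence $\varphi(\ker\psi)\subseteq\ker\mu_X$. However, $\varphi(\ker\psi)$ then leaves $\img(f^\ast\otimes f^\ast)$. The choice $\varphi(a_j)=0$ keeps $\varphi(\mathcal B)=\img(f^\ast\otimes f^\ast)$, but an element $y_0+(\text{terms involving }a_j,da_j)\in\ker\psi$ only forces $\mu_X(f^\ast\otimes f^\ast)(y_0)$ to lie in the image of the $a$-terms under $\psi$, not to vanish.

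I would first try to cut down $\mathcal B$. Only the classes in $H^\ast(X)$ not already hit by $\mu_X(f^\ast\otimes f^\ast)$ need new generators $a_j$, and $d a_j$ should be sent to $0$ while the $a_j$ generate a complement. Then decompose $\ker\psi$ by a filtration in the $a_j$: I expect the part of $\ker\psi$ involving the $a_j$ either to map to zero under $\varphi$, or to be absorbed using $\mathcal N^{m+1}$ modulo a contractible ideal.

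If this bookkeeping fails, the fallback is to prove the upper bound topologically. Given $\nil(\ker\mu_X\cap\img(f^\ast\otimes f^\ast))=m$, one would construct the retraction $\underline\sigma$ of $\underline j_m$ from a lift $\mathcal B/\mathcal N^{m+1}\to H^\ast(X)^{\otimes 2}$, using the quasi-isomorphism $\mathcal B\simeq H^\ast(Y)^{\otimes 2}$. I expect this compatibility of $\ker\psi$ with $\img(f^\ast\otimes f^\ast)$ to be the real difficulty; Steps 1 and 2 are routine given the earlier results.
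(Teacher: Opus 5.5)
\textbf{There is a genuine gap: Step 3 never proves the upper bound.} Your Steps 1 and 2 are sound. You are also right that $\psi=\mu_X\circ(f^\ast\otimes f^\ast)=f^\ast\circ\mu_Y$ is in general not surjective; its cohomology image is $A:=\img f^\ast$. The paper's proof applies \cref{thm: secat for formal cdga map} directly to this pair, without addressing that hypothesis, and then checks the set identity. However, as written you have only proved $\nil(\ker\mu_X\cap\img(f^\ast\otimes f^\ast))\le\TC^{MW}(f_0)$; the inequality $\TC^{MW}(f_0)\le\nil(\ker\mu_X\cap\img(f^\ast\otimes f^\ast))$ is left open. The repair you sketch, cutting down $\mathcal B$ and setting $\varphi(a_j)=0$, does not give the sharp bound. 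Take the formal pinch map $f\colon S^2\times S^3\to S^5$, with $f^\ast(y)=x_2x_3=:u$, and add generators $a_2\mapsto x_2$, $a_3\mapsto x_3$ for the complement of $A$. Then $y\otimes 1-a_2a_3$ and $1\otimes y-a_2a_3$ lie in $\ker\psi$. They are sent to $u\otimes 1$ and $1\otimes u$, whose product $u\otimes u$ is nonzero. So $\nil(\varphi(\ker\psi))\ge 2$ for that choice, whereas $\nil(\ker\mu_X\cap\img(f^\ast\otimes f^\ast))=1$ here, since $|u|$ is odd.

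\textbf{The fix is your first choice, $\varphi(a_j)=a_j\otimes 1$, which you discarded for an irrelevant reason.} Since $\mu_X\varphi=\psi$, one gets $\varphi(\ker\psi)=\ker\mu_X\cap\varphi(\mathcal B)$, and $\varphi(\mathcal B)=H^\ast(X)\otimes A$. That this leaves $A\otimes A=\img(f^\ast\otimes f^\ast)$ is harmless, because only nilpotency matters. Any $\xi=\sum_i r_i\otimes a_i\in H^\ast(X)\otimes A$ with $\mu_X(\xi)=0$ can be written as $\xi=\sum_i (r_i\otimes 1)(1\otimes a_i-a_i\otimes 1)$. Hence $\varphi(\ker\psi)$ is the ideal of $H^\ast(X)\otimes A$ generated by $\ker\mu_A$, where $\mu_A\colon A\otimes A\to A$ is the multiplication. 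By graded commutativity, its $(m+1)$-fold products lie in $(H^\ast(X)\otimes A)\cdot(\ker\mu_A)^{m+1}$. So $\nil(\varphi(\ker\psi))=\nil(\ker\mu_A)$, and $\ker\mu_A=\ker\mu_X\cap(A\otimes A)=\ker\mu_X\cap\img(f^\ast\otimes f^\ast)$. Now \cref{prop: compare secat vs sc} and \cref{prop: compare Hsc msc sc nil} supply the missing upper bound; neither needs $\psi^\ast$ surjective. Together with Step 2, this gives a proof that is more careful than the paper's one-line appeal to the formal theorem.

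A topological alternative also works. Factor $f_0$ through the realization $Z$ of $A$. Then $\TC^{MW}(f_0)\le\TC(Z)$, by taking $s=\mathrm{pr}_1$ over preimages of motion-planning domains of $Z$. Finally, $\TC(Z)=\nil(\ker\mu_A)$ because $Z$ is formal.
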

\begin{proof}
By the algebraic characterization of the relative sectional category for formal maps, we have
\[
\TC^{MW}(f_0)=\nil\bigl((f^\ast \otimes f^\ast)(\ker(\mu_X \circ (f^\ast \otimes f^\ast)))\bigr).
\]
We claim that
\[
(f^\ast \otimes f^\ast)\bigl(\ker(\mu_X \circ (f^\ast \otimes f^\ast))\bigr)
= \ker \mu_X \cap \img(f^\ast \otimes f^\ast).
\]

Let $f^\ast(a)\otimes f^\ast(b)\in \ker \mu_X \cap \img(f^\ast \otimes f^\ast)$ for some $a,b\in H^\ast(Y)$. Then
\[
\mu_X\bigl(f^\ast(a)\otimes f^\ast(b)\bigr)=f^\ast(ab)=0,
\]
which implies $a\otimes b \in \ker(\mu_X \circ (f^\ast \otimes f^\ast))$. Hence
\[
f^\ast(a)\otimes f^\ast(b)\in (f^\ast \otimes f^\ast)\bigl(\ker(\mu_X \circ (f^\ast \otimes f^\ast))\bigr).
\]

Conversely, if $\xi \in (f^\ast \otimes f^\ast)(\ker(\mu_X \circ (f^\ast \otimes f^\ast)))$, then $\xi=f^\ast(a)\otimes f^\ast(b)$ for some $a\otimes b \in \ker \mu_X \circ(f^\ast \otimes f^\ast).$  Then $f^\ast(a)\cdot f^\ast(b)=0,$ which means, $$\xi=f^\ast(a)\otimes f^\ast(b)\in \ker \mu_X \cap \img(f^\ast \otimes f^\ast).$$ The claim follows, completing the proof.
\end{proof}

We now give a class of maps for which the topological complexity can be found explicitly, generalizing \cite[Theorem 1.4]{JessupMurilloParent2012}.
\begin{theorem}\label{thm: codomain with odd homotopy}
    Let $f:X\ra Y$ be a homotopy retract and $\pi_\ast(Y)\otimes \QQ$ is finite-dimensional and concentrated in odd degrees. Then $\TC_n(f_0)=(n-1)k,$ where $k=\dim(\pi_\ast(Y)\otimes \QQ).$  \end{theorem}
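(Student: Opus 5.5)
We prove $\TC_n(f_0)=(n-1)k$ by squeezing $\TC_n(f_0)$ between $\TC_n(Y_0)$ from both sides. We use that $Y_0$ is formal with $H^\ast(Y)=\Lambda(x_1,\dots,x_k)$, an exterior algebra on odd generators. This holds because the minimal model $(\Lambda V,d)$ has $V$ concentrated in odd degrees, and $H^\ast(Y)$ finite-dimensional forces $d=0$. Indeed, $d(V)\subseteq\Lambda^{\geq2}V$ would have even degree, but $\Lambda V$ with $V$ odd has nonzero elements of even degree only in even word-length; one checks directly, as in \cite[Theorem 1.4]{JessupMurilloParent2012}, that $Y_0$ is a product of odd spheres rationally. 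So $Y_0\simeq\prod_{j=1}^k S^{2m_j+1}_0$. For such $Y$ one has $\TC_n(Y_0)=(n-1)k$: this is the sequential analogue of the Jessup--Murillo--Parent computation. It is also recovered from \cref{prop: Tc for formal map} applied to $f=\id_Y$, which gives $\nil(\ker\mu_n)$ for $\mu_n:\Lambda(x)^{\otimes n}\to\Lambda(x)$. Writing $z^{(j)}_i=x^{(j)}_i-x^{(j)}_1$, the ideal $\ker\mu_n$ is generated by these classes, each squares to zero, and the products over $i,j$ are nonzero by the odd-sphere computation in \cref{ex: Tc of sphere inclusions} together with the tensor product structure. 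This gives exactly $\nil=(n-1)k$.

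For the upper bound, note $\TC_n(f)=\secat{f^n}{\pi_n}\le\secat{}{\pi_n}=\TC_n(Y)$. Any local homotopy section $s$ of $\pi_n$ on $U\subseteq Y^n$ yields the section $s\circ f^n$ on $(f^n)^{-1}(U)$. Applying this to $f_0$ gives $\TC_n(f_0)\le\TC_n(Y_0)=(n-1)k$.

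For the lower bound, let $r:Y\to X$ be a homotopy retraction with $f r\simeq\id_Y$; this is our reading of ``homotopy retract'', namely that $f$ admits a homotopy section. Then $f^\ast$ is injective on rational cohomology, since $r^\ast f^\ast=\id$, and so is $(f^\ast)^{\otimes n}$. Rather than assume formality of $f$, we use the cohomological lower bound \eqref{eq: lb for relsecat} with $p=\pi_n$, $p^\ast=\mu_n$ up to the identification $H^\ast(Y^I)\cong H^\ast(Y)$:
\[
\nil\bigl((f^\ast)^{\otimes n}(\ker\mu_n)\bigr)\le\TC_n(f_0).
\]
By \cref{lem: nilpotency preserved by mono}, the left side equals $\nil(\ker\mu_n)=(n-1)k$. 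Alternatively, we can argue at the space level: composing local sections over $U$ for $f^n$ with $r^n$ shows $\TC_n(Y_0)\le\TC_n(f_0)$.

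The main obstacle is the identification $H^\ast(Y)\cong\Lambda(x_1,\dots,x_k)$, that is, $d=0$ in the minimal model. If the degree argument above fails, we would instead invoke the standard fact that a finite-dimensional $\pi_\ast\otimes\QQ$ concentrated in odd degrees forces $d=0$: elements of $\Lambda^{\geq2}V$ of odd degree need odd word-length $\geq3$, and $\Lambda^{\geq 2}V$ is then handled by a dimension and Euler characteristic argument. Failing that, we would cite the computation $\nil(\ker\mu_n)=(n-1)\dim V$ for Sullivan algebras with odd generators, via the Jessup--Murillo--Parent method, directly at the level of the lower bound.
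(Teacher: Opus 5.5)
The gap is your claim that the minimal model $(\Lambda V,d)$ of $Y$ has $d=0$, so that $Y_0$ is a product of odd spheres and $H^\ast(Y)$ is an exterior algebra. Your degree count only shows that $dv$ lies in even word-length, and word-length $2$ is allowed. Take $(\Lambda(x,y,z),d)$ with $|x|=|y|=3$, $|z|=5$ and $dz=xy$. This is a minimal Sullivan algebra, so its realization $Y$ (with $f=\id_Y$) satisfies the hypotheses, and $\pi_\ast(Y)\otimes\QQ$ is three-dimensional and concentrated in odd degrees. Yet $H^\ast(Y)$ has basis $1,[x],[y],[xz],[yz],[xyz]$ with $[x][y]=0$. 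Moreover $Y$ is not formal: $\langle [x],[x],[y]\rangle=\pm[xz]\neq 0$, with zero indeterminacy since $H^5(Y)=0$. So no Euler-characteristic argument can establish $d=0$, and \cref{prop: Tc for formal map} with $f=\id_Y$ is unavailable. In fairness, the paper's proof makes the same tacit assumption. It writes $\Lambda V_Y=\Lambda(x_1,\dots,x_k)$ and then treats $(\Lambda V_Y)^{\otimes n-1}$ as its own cohomology in the lower bound. Only its upper bound, a chain-level nilpotency count, is insensitive to $d$.

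This is not just a missing justification; the lower bound itself fails. In the example, $H^\ast(Y)$ has cup length $2$. Hence the cohomological ideal $\ker\bigl(\mu_n\colon H^\ast(Y)^{\otimes n}\to H^\ast(Y)\bigr)$, which lies in the positive part of an algebra of cup length at most $2n$, has nilpotency at most $2n$. That is $<3(n-1)$ once $n\ge 4$, so your lower bound via \eqref{eq: lb for relsecat} cannot reach $(n-1)k$. Your last fallback conflates this with the chain-level ideal $\ker\bigl(\mu_n\colon(\Lambda V)^{\otimes n}\to\Lambda V\bigr)$. That ideal is generated by the $(n-1)k$ odd elements $v_{(j)}-v_{(1)}$, where $2\le j\le n$, $v$ runs over a basis of $V$, and $v_{(j)}$ denotes $v$ in the $j$-th factor. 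It therefore has nilpotency $(n-1)k$ for every $d$, but through \cref{prop: compare secat vs sc} and \cref{prop: compare Hsc msc sc nil} it gives only an upper bound.

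The argument can be repaired with what you already have. Your pullback argument and your composition with $r^n$ are correct and need no formality, so $\TC_n(f_0)=\TC_n(Y_0)$. This is a cleaner reduction than the paper's, which instead uses injectivity of the minimal model of $f$ and the bound $\cat(f_0^{n-1})\le\TC_n(f_0)$. It remains to show $\TC_n(Y_0)=(n-1)k$ for arbitrary $d$:
\begin{itemize}
\item For $\le$, use the chain-level count above with $\varphi=\id$ in \cref{prop: rational version of tc}.
\item For $\ge$, use $\cat(Y_0^{n-1})\le\TC_n(Y_0)$, which is the paper's slice argument with $f=\id_Y$.
\item Then show $\cat(Y_0^{n-1})=(n-1)k$. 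The minimal model of $Y^{n-1}$ is an exterior algebra on $(n-1)k$ odd generators, so it vanishes in word-length $>(n-1)k$, giving $\le$. Its top monomial represents the nonzero fundamental class of this elliptic algebra, so the Toomer invariant, and hence the category, is at least $(n-1)k$.
\end{itemize}
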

\begin{proof}
    Let $\varphi:\Lambda V_Y\ra \Lambda V_X$ be the minimal model of $f$. By \cref{prop: rational version of tc}, we have $TC_n(f_0)=\secat{{\varphi}^{\otimes n}}{\mu_n}.$
    We may assume $\Lambda V_Y=\Lambda(x_1,\dots , x_k)$, where $x_i$ are generators of odd degrees. Let $x_{i,j}$ denote the element $1\otimes\cdots \otimes x_i\otimes\cdots \otimes 1$ where $x_i$ appears in $j$-th tensor factor. Then $$(\Lambda V_Y)^{\otimes n}=\Lambda(x_{i,j} \mid 1 \leq i \leq k, 1 \leq j \leq n).$$
    The kernel $\ker \mu_n$ of the multiplication map is the ideal generated by
    $$\{x_{i,j}-x_{i,1} \mid 1 \leq i \leq k, 2 \leq j \leq n\}. $$
We also note that $\nil(\ker \mu_n)=(n-1)k$ from a similar calculation as in the odd-dimensional case of \cref{ex: Tc of sphere inclusions}.
    
    By \cite[Theorem 15.11]{FelixHalperinThomas2001}, we have a natural isomorphism between the linear part of the minimal model and the dual of rational homotopy groups
    \[\begin{tikzcd}
	{V_Y} & {\hom(\pi_\ast (Y),\QQ)} \\
	{V_X} & {\hom(\pi_\ast (X),\QQ)}
	\arrow["\cong", from=1-1, to=1-2]
	\arrow["{Q(f)}"', from=1-1, to=2-1]
	\arrow["{\hom(\pi_\ast (f),\QQ)}", from=1-2, to=2-2]
	\arrow["\cong", from=2-1, to=2-2]
\end{tikzcd}\]
    By hypothesis, there is a map $i:Y\ra X$ such that $fi\simeq \id_Y $. Thus, on rational homotopy groups, $f_\ast i_\ast=\id.$
    Thus, we have $\pi_\ast(f)\otimes \QQ$ is surjective, meaning its dual, the linear part $Q(f)$, is injective. Consequently, $\varphi$ itself is injective. By \cref{lem: nilpotency preserved by mono}, we have $\nil({\varphi}^{\otimes n}(\ker \mu_n))=k(n-1)$.
    Applying \cref{prop: compare Hsc msc sc nil} and \cref{prop: compare secat vs sc} yields $\secat{{\varphi}^{\otimes n}}{\mu_n} \leq \nil({\varphi}^{\otimes n}(\ker \mu_n))$. Therefore, 
$$\secat{{\varphi}^{\otimes n}}{\mu_n} \leq k(n-1).$$

 To establish the lower bound, we use \cref{prop: compare nilker vs secat}, which gives $$\nil ({{\varphi}^\ast}^{\otimes n-1} ((\Lambda V_Y)^{\otimes n-1}) \leq \cat (f_0^{n-1}).$$
   On cohomology, we have $i^\ast f^\ast=\id$, so $f^\ast=\varphi^\ast$ is injective. Therefore $\nil({{\varphi}^\ast}^{\otimes n-1} ((\Lambda V_Y)^{\otimes n-1})^+)=(n-1)k.$
   
    We now show that $\cat (f^{n-1})\leq \TC_n(f)$ similar to the result given in \cite[Proposition 3.8]{Scott}. 
   $j \colon U \hookrightarrow X^n$ is the inclusion of an open set that admits a map $s \colon U \to Y^I$ such that the following diagram commutes:
    \[\begin{tikzcd}
	U & {Y^I} \\
	{X^n} & {Y^n}
	\arrow["s", from=1-1, to=1-2]
	\arrow["j"', from=1-1, to=2-1]
	\arrow["{\pi_n}", from=1-2, to=2-2]
	\arrow["{f^n}"', from=2-1, to=2-2]
\end{tikzcd}\]
where the fibration $\pi_n \colon Y^I \to Y^n$ evaluates a path at $n$ equally spaced points. 
 Let $x_0\in X.$ Restricting the diagram to the slice $X^{n-1} \times \{x_0\}$ yields:
\[\begin{tikzcd}
	{U|_{X^{n-1}\times \{x_0\}}} & {P_1Y} \\
	{{X^{n-1}\times \{x_0\}}} & {{Y^{n-1}\times \{f(x_0)\}}}
	\arrow["s", from=1-1, to=1-2]
	\arrow["j"', from=1-1, to=2-1]
	\arrow["{\pi_n}", from=1-2, to=2-2]
	\arrow["{f^n}"', from=2-1, to=2-2]
\end{tikzcd}\]
where $P_1Y$ denotes the space of all paths in $Y$ that end at $f(x_0).$
Since $P_1Y$ is contractible, the commutativity of the diagram implies that the restriction map $f^{n-1}:{U|_{X^{n-1}}}\ra Y^{n-1}$ is nullhomotopic. Thus, we have $\cat (f^{n-1})\leq \TC_n(f)$.

Therefore, combining these bounds gives $$k(n-1)\leq \nil ({{\varphi}^\ast}^{\otimes n-1} ((\Lambda V_Y)^{\otimes n-1})\leq \cat (f_0^{n-1})\leq \TC_n(f_0).$$
     This implies $\TC_n(f_0)=k(n-1)$, completing the proof.
\end{proof}
\begin{example}
Let $f: X\ra K(G,m)$ be a retraction where $G=\ZZ^l\oplus (\text{torsion})$ and $m>1$ is odd. 
Clearly, $\pi_\ast(K(G,m))\otimes\QQ=\QQ^l$ which has dimension $l.$
By \cref{thm: codomain with odd homotopy}, we have $\TC_n(f_0)=(n-1)l.$
\end{example}

\begin{remark}
A useful example of the relative sectional category is homotopic distance, which was introduced by Mac\'{\i}as-Virg\'os and   Mosquera-Lois \cite{MACÍAS–VIRGÓS_MOSQUERA–LOIS_2022} and measures how far two maps are from being homotopic.
As observed in \cite{Calc24}, the homotopic distance between two maps $f,g:X\ra Y$ is given by $D(f,g)=\secat{(f\times g)\Delta_X}{\Delta_Y}$, where $\Delta_X:X\ra X\times X$ and $\Delta_Y:Y\ra Y\times Y$ are the diagonal maps. For a pair of formal maps between rational spaces, we get an explicit description of the homotopic distance using similar algebraic machinery.
Specifically, if $f,g \colon X \to Y$ are formal maps between rational spaces, then
   $$D(f_0,g_0)=\nil ((\mu_X (f^\ast \otimes g^\ast))(\ker \mu_Y))$$
   where $\mu_Z:H^\ast(Z)\otimes H^\ast(Z) \ra H^\ast(Z)$ is the multiplication map for $Z=X,Y$.    
\end{remark}

\noindent \textbf{Acknowledgment:}
The authors express their sincere gratitude to their research advisor, Prof. Rekha Santhanam, for her constant guidance, encouragement, and insightful discussions throughout this work; her perspective was instrumental in shaping the direction of this article.
Bittu Singh would like to thank Prof. Navnath Daundkar and Dr. Soumyadip Thandar for numerous valuable discussions. Lekha Das acknowledges the support provided by IIT Bombay through its research grant. Bittu Singh is grateful to the Prime Minister's Research Fellowship (PMRF ID 1302644), Government of India, for his financial support.
\bibliographystyle{alpha}
\bibliography{References}

\end{document}